\newcommand{\Hor}{\mathcal{H}}
\newcommand{\V}{\mathcal{V}}
\newcommand{\J}{\mathcal{J}}
\newcommand{\df}{{\rm d}}
\newcommand{\R}{\mathds R}
\newcommand{\spl}{\mathrm{sp}}
\newcommand{\Ddt}{\tfrac{\mathrm D}{\mathrm dt}}
\newcommand{\ddt}{\tfrac{\mathrm d}{\mathrm dt}}
\newcommand{\Spl}{\mathrm{Sp}}
\newcommand{\GL}{\mathrm{GL}}
\newcommand{\Dim}{\mathrm{dim}}
\newcommand{\Bsym}{\mathrm{B_{\mathrm{sym}}}}
\newcommand{\segnatura}{\mathrm{sign}}
\newcommand{\Ker}{\mathrm{Ker}}
\numberwithin{equation}{section}
\renewcommand{\contentsline}[3]{\csname new#1\endcsname{#2}{#3}}
\newcommand{\newchapter}[2]{\bigskip\hbox to \hsize{\vbox{\advance\hsize by -.5cm\baselineskip=12pt\parfillskip=0pt\leftskip=2cm\noindent\hskip -2cm #1\leaders\hbox{.}\hfil\hfil\par}$\,$#2\hfil}}
\newcommand{\newsection}[2]{\medskip\hbox to \hsize{\vbox{\advance\hsize by -.5cm\baselineskip=12pt\parfillskip=0pt\leftskip=2.5cm\noindent\hskip -2cm #1\leaders\hbox{.}\hfil\hfil\par}$\,$#2\hfil}}
\newcommand{\newsubsection}[2]{\medskip\hbox to \hsize{\vbox{\advance\hsize by -.5cm\baselineskip=12pt\parfillskip=0pt\leftskip=3.5cm\noindent\hskip -2cm #1\leaders\hbox{.}\hfil\hfil\par}$\,$#2\hfil}}
\title{Maslov index in semi-Riemannian submersions}
\author[E. Caponio]{Erasmo Caponio}
\address{Dipartimento di Matematica,\hfill\break\indent
Politecnico di Bari, \hfill\break\indent Via Orabona 4,
70125, Bari, Italy}
\email{caponio@poliba.it}
\author[M. A. Javaloyes]{Miguel Angel Javaloyes}
\address{Departamento de Geometr\'{\i}a y Topolog\'{\i}a.\hfill\break\indent
 Facultad de Ciencias, Universidad de Granada.\hfill\break\indent
 Campus Fuentenueva s/n, 18071 Granada, Spain}
 \email{ma.javaloyes@gmail.es}
\author[P.\ Piccione]{Paolo Piccione}
\address{Departamento de Matem\'atica,\hfill\break\indent
Universidade de S\~ao Paulo, \hfill\break\indent Rua do Mat\~ao
1010,\hfill\break\indent CEP 05508-900, S\~ao Paulo, SP, Brazil}
\email{piccione@ime.usp.br}
\urladdr{http://www.ime.usp.br/\~{}piccione}
\curraddr{Department of Mathematics, \hfill\break\indent
University of Murcia, Campus de Espinardo\hfill\break\indent
30100 Espinardo, Murcia, \hfill\break\indent Spain}
\thanks{First author is supported by M.I.U.R. Research project PRIN07 "Metodi Variazionali e  
Topologici nello Studio di Fenomeni Nonlineari".
Second author was partially supported by Regional J.
Andaluc\'{\i}a Grant P06-FQM-01951, by Fundaci\'on S\'eneca project 04540/GERM/06   and by Spanish MEC Grant MTM2009-10418.
The third author is sponsored by Capes, Brazil, grant BEX 1509/08-0.}
\subjclass[2000]{53C50}
\keywords{semi-Riemannian submersions, O'Neill tensors, Maslov index}
\date{January 8th, 2010}
\begin{document}


\theoremstyle{plain}\newtheorem{teo}{Theorem}[section]
\theoremstyle{plain}\newtheorem{prop}[teo]{Proposition}
\theoremstyle{plain}\newtheorem{lem}[teo]{Lemma}
\theoremstyle{plain}\newtheorem{cor}[teo]{Corollary}
\theoremstyle{definition}\newtheorem{defin}[teo]{Definition}
\theoremstyle{remark}\newtheorem{rem}[teo]{Remark}
\theoremstyle{definition}\newtheorem{example}[teo]{Example}


\begin{abstract}
We study focal points and Maslov index of a horizontal geodesic $\gamma:I\to M$ in the total
space of a semi-Riemannian submersion $\pi:M\to B$ by determining an explicit relation
with the corresponding objects along the projected geodesic $\pi\circ\gamma:I\to B$ in the base space.
We use this result to calculate the focal Maslov index  of a (spacelike) geodesic in a stationary
space-time which is orthogonal to a timelike  Killing vector field.
\end{abstract}

\maketitle
\tableofcontents

\begin{section}{Introduction}\label{sec:intro}
Riemannian submersions were introduced in the sixties by B. O'Neill and A. Gray (see \cite{Gra67,One66,One67})
as a tool to study the geometry of a Riemannian manifold with an additional structure in terms of certain
components, that is, the fibers and the base space.  A Riemannian submersion is a map $\pi:M\to B$
whose differential $\mathrm d\pi_p:T_pM\to T_{\pi(p)}B$ is surjective for all
 $p$, and such that its restriction to the space orthogonal to the fibers is an isometry with
$T_{\pi(p)}B$. The notion of Riemannian submersion can be naturally
extended to the case where the metric tensor is not positive  definite, i.e., to the semi-Riemannian case.
The novelty here is that both the total space and the base may have non positive definite metric tensors,
and the interesting observation is that the fibers of the submersion, which are embedded submanifolds
of the total space, are automatically non-degenerate.
Most of the results for Riemannian submersions, whose proofs involve only the relations between the Levi--Civita connections of the metrics of $M$ and of $B$ can be reproduced in the semi-Riemannian case by
adapting carefully the Riemannian proofs. For instance, a well known result by Hermann characterizes which Riemannian
fibrations are fiber bundles (see \cite{Herman60}); a similar result can be obtained in the semi-Riemannian
case with suitable modifications of Hermann's original proof (see Proposition \ref{hermanngener}). The situation becomes a little more involved when it gets
to conjugate or focal points, or to questions involving Morse index. Note in fact that conjugate/focal
points may accumulate in semi-Riemannian geometry (see \cite{PiTa03}), the Morse index is always infinite,
and its natural substitute, the \emph{Maslov index}, is not computed directly using the multiplicities
of the conjugate/focal points. In one of the classical papers by O'Neill (see \cite{One67}), the author
shows a correspondence between horizontal geodesics in the total space of a submersion and geodesics
in the base manifold, as well as relations between Jacobi fields, conjugate points and Morse index
of the two families of geodesics. This analysis cannot be carried over to the general semi-Riemannian
case with the techniques of \cite{One67}, and the goal of the present paper is to extend the results in
\cite{One67} to semi-Riemannian submersions. Our motivation for this kind of analysis comes
from specific examples of semi-Riemannian submersions, namely,  stationary and Kaluza-Klein spacetimes.

By a recent result of Javaloyes and S\'anchez (see \cite{JS08}) a distinguishing stationary spacetime
has a \emph{standard form}, i.e. given any complete timelike Killing vector field, there exists a
global spacetime decomposition of the form  $S\times \R$, with Killing field tangent to the
fiber $\R$ and $S$ a spacelike hypersurface. In this case, the projection onto $S$ is a semi-Riemannian
submersion, and the base space is Riemannian, thus one can prove a Riemannian global behavior for the
spacelike Lorentzian geodesics that are orthogonal to some complete timelike  Killing  vector field  (see Proposition \ref{thm:applstationary}). On the other hand,
in its most general setting,  the geometry of a Kaluza-Klein spacetime can be  described as
a  semi-Riemannian submersion $\pi\colon \tilde M\to M$, where $M$ is  the four dimensional spacetime,
while $\tilde M$ is the  multidimensional one.
We will show that conjugate points in a horizontal causal geodesic determine conjugate points in its projection  (see Proposition \ref{thm:applKaluzaKlein}).

The central issue in this paper is to study the Maslov index of a horizontal geodesic.
The Maslov index of a semi-Riemannian geodesic can be thought as an algebraic count of the singularities
of the exponential map along the geodesic. It was A. Helfer (see \cite{Hel94}) the first one to apply the abstract
notion of the Maslov index, that is, an intersection number in the Lagrangian Grassmannian of a symplectic space,
to the context of semi-Riemannian geodesics. This has been an essential tool in the development of Morse
theory and bifurcation theory for the strongly indefinite semi-Riemannian geodesic problem
(see for instance \cite{MerPicTau02, PiPoTa04}).
The main result of this paper is that the Maslov index of a horizontal geodesic $\gamma$ in the total
space of a semi-Riemannian submersion $\pi:M\to B$, and relative to an initial orthogonal manifold
$\mathcal Q=\pi^{-1}(\mathcal P)$, is equal to the Maslov index of the projected geodesic
$\pi\circ\gamma:I\to B$ relative to the initial orthogonal submanifold $\mathcal P$.
Under a certain (generic) non-degenerate situation, this is proved directly by showing
that corresponding conjugate points on $\gamma$ and on $\pi\circ\gamma$ give
the same contribution to the Maslov index (Proposition~\ref{contribution}).
For the general, i.e., possibly degenerate, case (Theorem~\ref{thm:eqMaslovindices}) the
proof is obtained by comparing the Lagrangian curves arising from the Jacobi equations
along the geodesics, and establishing a certain decomposition property of the Lagrangian
curve associated to the horizontal geodesic.

The Maslov index of a geodesic is computed using a trivialization of the tangent bundle
along the geodesic. In the standard literature, it is customary to define this index using
parallel trivializations along the geodesic.
However, in order to prove the equality between the Maslov indexes of a horizontal geodesic and
its projection, parallel trivializations do not work, because the projection onto the base
does not preserve parallelism. This forces us to use more general trivializations,
and some preliminary results on the independence of the Maslov index by arbitrary trivializations
are necessary (see Section~\ref{PreMaslov}).

A second problem to be studied is the question of lifting curves in the base to horizontal curves
in the total space, and lifting vector fields along curves in the base to infinitesimally horizontal
vector fields along horizontal curves. In Section~\ref{sec:horizontalgeo} we study this problem,
giving a geometric characterization for the \emph{derived vector field}, introduced by O'Neill in \cite{One67}.
A vector field along a horizontal curve is infinitesimally horizontal if it is the variational
vector field of a variation by other horizontal curves. The study of these fields goes through
an analysis of the tangent bundle to an abstract distribution $\mathcal D$ on a manifold $M$ endowed with a connection,
seen as a submanifold of the tangent bundle $TM$ (see Subsection~\ref{sub:horinfhor}). This
analysis uses the notion of second fundamental form of a distribution, which in the case
of the horizontal distribution of a semi-Riemannian submersion is computed in terms of the
fundamental tensors of the submersion (Subsection~\ref{sub:fundtensors}).
The relation between the second fundamental form of a semi-Riemannian  submanifold in the
base and the second fundamental form of its lift to the total space is studied in Proposition \ref{prop:secondfund}.

In the last part of the paper we study the index forms along a horizontal geodesic and its projection
(Theorem~\ref{indexrelation}), and we discuss a few applications of our results in stationary
and Kaluza-Klein spacetimes and
in bifurcation theory of geodesics (Subsection~\ref{sub:applications}).
\end{section}
\begin{section}{Preliminaries}\label{PreMaslov}
The literature on the notion of Maslov index and its applications is extremely rich, see for instance
references \cite{Gosson, Gosson2, Morvan, RobbinSalamon, SalamonZehnder}. We will give here a short account
of the basics needed for our purposes. The goal is proving an invariance property of the Maslov
index of a semi-Riemannian geodesic by arbitrary changes of trivialization of the tangent bundle 
along the geodesic (Proposition~\ref{thm:changeoftrivialization}).
\subsection{The Maslov index}
Let us consider a symplectic space $(V,\omega)$, with $\Dim(V)=2n$;
we will denote by $\Spl(V,\omega)$ the \emph{symplectic group} of $(V,\omega)$,
which is the closed Lie subgroup of $\GL(V)$ consisting of all isomorphisms
that preserve $\omega$.
A subspace $X\subset V$ is \emph{isotropic} if the restriction of $\omega$ to $X\times X$ vanishes
identically; an $n$-dimensional (i.e., maximal) isotropic subspace $L$ of $V$ is called
a \emph{Lagrangian subspace}. We denote by $\Lambda$ the Lagrangian Grassmannian of $(V,\omega)$,
which is the collection of all Lagrangian subspaces of $(V,\omega)$, and is a compact differentiable
manifold of dimension $\frac12n(n+1)$. A real-analytic atlas of charts on $\Lambda$ is given
as follows. Given a Lagrangian decomposition $(L_0,L_1)$ of $V$, i.e., $L_0,L_1\in\Lambda$ are transverse
Lagrangians, so that $V=L_0\oplus L_1$, then denote by $\Lambda^0(L_1)$ the open and dense subset
of $\Lambda$ consisting of all Lagrangians $L$ transverse to $L_1$. A diffeomorphism
$\varphi_{L_0,L_1}$ from $\Lambda^0(L_1)$ to the vector space $\Bsym(L_0)$ of all symmetric
bilinear forms on $L_0$ is defined by $\varphi_{L_0,L_1}(L)=\omega(T\cdot,\cdot)\vert_{L_0\times L_0}$,
where $T:L_0\to L_1$ is the unique linear map whose graph in $L_0\oplus L_1=V$ is $L$.
The kernel of $\varphi_{L_0,L_1}(L)$ is the space $L\cap L_0$.

Let us now briefly recall the notion of Maslov index for a continuous path $\ell:[a,b]\to\Lambda$.
More details on the theory can be found in \cite{PiTa08}.
For a fixed Lagrangian $L_0\in\Lambda$, the \emph{$L_0$-Maslov index }$\mu_{L_0}(\ell)$ of
$\ell$ is the half-integer characterized by the following properties:
\begin{itemize}
\item[(a)] $\mu_{L_0}$ is fixed-endpoint homotopy invariant;
\item[(b)] $\mu_{L_0}$ is additive by concatenation;
\item[(c)] if $\ell\big([a,b])\subset\Lambda^0(L_1)$ for some Lagrangian $L_1$ transverse
to $L_0$, then \[\mu_{L_0}(\ell)=\tfrac12\segnatura\big[\varphi_{L_0,L_1}\big(\ell(b)\big)\big]-
\tfrac12\segnatura\big[\varphi_{L_0,L_1}\big(\ell(a)\big)\big].\]
\end{itemize}
The Maslov index is invariant by symplectomorphisms, i.e., given two symplectic spaces
$(V_1,\omega_1)$, $(V_2,\omega_2)$, a continuous Lagrangian path $\ell:[a,b]\to\Lambda(V_1,\omega_1)$,
a fixed Lagrangian $L_0\in\Lambda(V_1,\omega_1)$ and a symplectomorphism $\phi_0:(V_1,\omega_1)\to(V_2,\omega_2)$,
then setting $\widetilde\ell(t)=\phi_0\big(\ell(t)\big)$ and $\widetilde L_0=\phi_0(L_0)$, one has
$\mu_{L_0}(\ell)=\mu_{\widetilde L_0}(\widetilde\ell)$.
Moreover, the Maslov index  is additive by direct sums of symplectic spaces in the following sense. Let $(V,\omega)=(V_1\oplus V_1,\omega_1\oplus\omega_2)$
be a decomposition of $(V,\omega)$ as the direct sum of the symplectic subspaces $(V_1,\omega_1)$ and $(V_2,\omega_2)$. Let $L_0$ and $\ell(t)$ be  Lagrangians of $(V,\omega)$ such that $L_0^i=L_0\cap V_i$ and $\ell^i(t)=\ell(t)\cap V_i$ are Lagrangians of $(V_i,\omega_i)$ for $i=1,2$ and $t\in[a,b]$. Then
\[\mu_{L_0}(\ell)=\mu_{L_0^1}(\ell^1)+\mu_{L_0^2}(\ell^2).\]
\begin{lem}\label{symplec}
Let $(V,\omega)$ and $(\tilde{V},\tilde{\omega})$ be two symplectic
spaces, $L_0$ a Lagrangian of $(V,\omega)$, $[a,b] \ni t\rightarrow {\ell}(t)$ a Lagrangian path in
$(V,\omega)$ and $\phi_t:V\rightarrow \tilde{V}$ with $t\in [a,b]$ a continuous family of symplectomorphisms
such that $\phi_t(L_0)$ is constantly equal to $\tilde{L}_0$. Consider the Lagrangian path in
$ (\tilde{V},\tilde{\omega})$ defined as $[a,b]\ni t\rightarrow\tilde{\ell}(t)=\phi_t(\ell (t))$.
Then the Maslov index $\mu_{L_0}(\ell)$ coincides with $\mu_{\tilde{L}_0}(\tilde{\ell})$.
\end{lem}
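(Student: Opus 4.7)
The plan is to first reduce to a simpler setting. Applying the symplectomorphism $\phi_a^{-1}:\tilde V\to V$, which sends $\tilde L_0$ to $L_0$, and invoking the invariance of the Maslov index under a single symplectomorphism (recalled just before the lemma), one would obtain
$$\mu_{\tilde L_0}(\tilde\ell)=\mu_{L_0}(\hat\ell),\qquad \hat\ell(t):=\psi_t(\ell(t)),$$
with $\psi_t:=\phi_a^{-1}\circ\phi_t$ a continuous family of symplectomorphisms of $(V,\omega)$ satisfying $\psi_a=\mathrm{id}$ and $\psi_t(L_0)=L_0$ for all $t$. The task would then reduce to proving $\mu_{L_0}(\hat\ell)=\mu_{L_0}(\ell)$.

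Next, I would introduce the continuous ``square'' map $H:[a,b]\times[a,b]\to\Lambda$ given by $H(s,t)=\psi_s(\ell(t))$. One has $H(a,t)=\ell(t)$, $H(t,t)=\hat\ell(t)$, and $s\mapsto H(s,a)$ is constant equal to $\ell(a)$. Since $[a,b]^2$ is simply connected, the diagonal path from $(a,a)$ to $(b,b)$ is homotopic with fixed endpoints to the $L$-shaped concatenation $t\mapsto H(a,t)=\ell(t)$ followed by $s\mapsto H(s,b)=\psi_s(\ell(b))$; pushing this homotopy forward via $H$ and applying the homotopy invariance and additivity by concatenation of the Maslov index would yield
$$\mu_{L_0}(\hat\ell)=\mu_{L_0}(\ell)+\mu_{L_0}(\beta),\qquad \beta(s):=\psi_s(\ell(b)).$$

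The remaining step, which I expect to be the main obstacle, is to show $\mu_{L_0}(\beta)=0$. Since $\psi_s(L_0)=L_0$, one has $\beta(s)\cap L_0=\psi_s(\ell(b)\cap L_0)$, so this intersection has constant dimension $k:=\Dim(\ell(b)\cap L_0)$ along the path. I would then partition $[a,b]$ into finitely many subintervals on each of which $\beta$ lies in a chart $\Lambda^0(L_1^{(i)})$ for some Lagrangian $L_1^{(i)}$ transverse to $L_0$, which is possible by compactness. On each such subinterval, property (c) identifies the Maslov index with half the difference of the signatures of $\varphi_{L_0,L_1^{(i)}}(\beta(\cdot))$ at the endpoints. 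The kernel of this symmetric bilinear form equals $\beta(s)\cap L_0$, of constant dimension $k$, hence its rank is constant; by the continuous dependence of the eigenvalues of a symmetric bilinear form, no nonzero eigenvalue can cross zero, and the signature is therefore constant along the subinterval. Each subinterval consequently contributes zero, and by additivity $\mu_{L_0}(\beta)=0$, completing the argument. The delicate part is precisely this constant-rank/constant-signature step together with the covering by charts; the preceding reductions are formal once the framework of Section~\ref{PreMaslov} is in place.
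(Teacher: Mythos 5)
Your proof is correct and is exactly the ``easy homotopy argument'' that the paper delegates to \cite[Propositions~5.4.3 and 5.4.5]{PiTa08}: the reduction via $\phi_a^{-1}$, the square homotopy replacing the diagonal by the concatenation of $\ell$ with $\beta(s)=\psi_s(\ell(b))$, and the vanishing of $\mu_{L_0}(\beta)$ because $\dim\big(\beta(s)\cap L_0\big)$ is constant are precisely the ingredients of the cited results. The step you flag as delicate is sound: a continuous family of symmetric bilinear forms with kernel of constant dimension has locally constant signature, so each chart subinterval contributes zero.
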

\begin{proof}
An easy homotopy argument, see \cite[Proposition~5.4.3, Proposition~5.4.5]{PiTa08}
\end{proof}
\subsection{Symplectic systems}
Consider the symplectic space $V=\R^n\oplus{\R^n}^*$ endowed with the symplectic form
$\omega\big((v,\alpha),(w,\beta)\big)=\beta(v)-\alpha(w)$. Let $\Spl(2n,\R)$ denote the symplectic
group, i.e., the group of all isomorphisms $T:V\to V$ that preserve $\omega$, and let
$\spl(2n,\R)$ denote its Lie algebra. Written in $n\times n$ blocks, an endomorphism
$\begin{pmatrix}A&B\cr C&D\end{pmatrix}$ belongs to $\spl(2n,\R)$ if and only if
$B$ and $C$ are symmetric and $D=-A^*$.

A \emph{symplectic system} is a system of differential equations:
\begin{equation}\label{eq:defsymplsystem}
\begin{pmatrix}v\cr\alpha\end{pmatrix}'=X(t)\begin{pmatrix}v\cr\alpha\end{pmatrix},
\end{equation}
where $X:[a,b]\to\spl(2n,\R)$ is a continuous map. The \emph{flow} of the symplectic
system \eqref{eq:defsymplsystem} is the $C^1$-curve $\Phi:[a,b]\to\Spl(2n,\R)$ satisfying
$\Phi'(t)=X(t)\Phi(t)$ and $\Phi(a)=\mathrm{Id}$.

Let $\Lambda$ denote the Lagrangian Grassmannian of $(V,\omega)$ and set $L_0=\{0\}\oplus{\R^n}^*$;
one has a curve $\ell:[a,b]\to\Lambda$ of class $C^1$ given by $\ell(t)=\Phi(t)\big[L_0\big]$.
We define the \emph{Maslov index} of the symplectic system \eqref{eq:defsymplsystem} to be the
$L_0$-Maslov index $\mu_{L_0}(\ell)$ of the curve $\ell$.

\smallskip

Consider two continuous maps $X_1,X_2:[a,b]\to\spl(2n,\R)$, with:
\[X_1=\begin{pmatrix}A_1&B_1\cr C_1&-A_1^*\end{pmatrix},\quad
X_2=\begin{pmatrix}A_2&B_2\cr C_2&-A_2^*\end{pmatrix};\]
the corresponding symplectic systems are \emph{isomorphic} if there exists a $C^2$-map
$Z:[a,b]\to\GL(\R^n)$ and a $C^1$-map $W:[a,b]\to\mathrm{Lin}(\R^n,{\R^n}^*)$ of symmetric
linear operators such that:
\begin{equation}\label{eq:ABC}
\begin{aligned}
&A_2=ZA_1Z^{-1}-ZB_1WZ^{-1}+Z'Z^{-1},\\
&B_2=ZB_1Z^*,\\
&C_2={Z^*}^{-1}(WA_1+C_1-WB_1W+A_1^*W+W')Z^{-1}.
\end{aligned}
\end{equation}
It is proved in \cite[Proposition~2.10.2]{london} that isomorphic symplectic systems have the same
Maslov index, under an (unnecessary) assumption that the final instant  is  non-conjugate.
The proof for the general case is obtained easily using the following facts:
\begin{itemize}
\item if $\ell_1,\ell_2:[a,b]\to\Lambda$ denote the curve of Lagrangians associated to the isomorphic
symplectic systems above,
then the following relation holds:
\begin{equation}\label{eq:elltildeell}
\ell_2(t)=\phi_0(t)\big[\ell_1(t)\big],\quad\forall\,t\in[a,b],
\end{equation}
where $\phi_0(t)$ is the symplectomorphism:
\begin{equation}\label{eq:phi0}
\phi_0(t)=\left(\begin{array}{cc}Z(t)&0\\ { Z(t)^*}^{-1}W(t)&{
Z(t)^*}^{-1}\end{array}\right).
\end{equation}
\item $\phi_0(t)$ preserves $L_0$ for all $t\in[a,b]$.
\item If $\phi_0$ is a continuous path of symplectomorphisms that preserve a Lagrangian $L_0$
and $\ell$ is any continuous curve in $\Lambda$, then the $L_0$-Maslov index of $\ell$ equals the
$L_0$-Maslov index of the curve $t\mapsto\phi_0(t)\big[\ell(t)\big]$ (see Lemma \ref{symplec}).
\end{itemize}
\end{section}
We observe that \eqref{eq:ABC} is equivalent to $X_2=\phi'_0\phi^{-1}_0+\phi_0 X_1\phi^{-1}_0$. Moreover, if $\Phi_1$ and $\Phi_2$ are the flows of the symplectic systems described by $X_1$ and $X_2$, then \eqref{eq:ABC} is also equivalent to $\Phi_2=\phi_0\Phi_1$.
\subsection{The symplectic system associated to a semi-Riemannian geodesic}\label{independence}
Let now $(M,g)$ be an $n$-dimensional semi-Riemannian manifold with Levi--Civita connection
$\nabla$, and let $\gamma:[a,b]\to M$ be a geodesic.
For all $t\in[a,b]$, let $\mathbf R_t:T_{\gamma(t)}M\to T_{\gamma(t)}M$ be the $g$-symmetric endomorphism
$R\big(\dot\gamma(t),\cdot\big)\dot\gamma(t)$, where $R$ is the curvature tensor of $\nabla$
chosen with the sign convention $R(X,Y)=[\nabla_X,\nabla_Y]-\nabla_{[X,Y]}$.
Consider any smooth ($C^2$) trivialization
\begin{equation}\label{eq:trivialization}
p(t):\R^n\stackrel\cong\longrightarrow T_{\gamma(t)}M
\end{equation}
of the pull-back bundle $\gamma^*TM$. Associated to this setup, we have the following objects:
\begin{itemize}
\item a continuous path $\widetilde g_t$ of non-degenerate symmetric bilinear forms on $\R^n$ defined
as the pull-back $\widetilde g_t=p(t)^*g_{\gamma(t)}$;
\item a continuous path of endomorphisms $\widetilde R(t):\R^n\to\R^n$ defined by the commutative diagram:
\[\xymatrix{T_{\gamma(t)}M\ar[r]^{\mathbf R_t}&T_{\gamma(t)}M\cr\R^n\ar[u]^{p(t)}\ar[r]_{\widetilde R(t)}&\R^n\ar[u]_{p(t)}}\]
For all $t$, $\widetilde R(t)$ is $\widetilde g_t$-symmetric.
\item A continuous map $\varpi:[a,b]\to\mathrm{gl}(\R^n)$ that relates the covariant derivative of
vector fields along $\gamma$ with the corresponding curves in $\R^n$ by the following formula:
\begin{equation}\label{covdev}
\Ddt\big[p(t)\widetilde v(t)\big]=p(t)\widetilde v'(t)+p(t)\varpi(t)\widetilde v(t),
\end{equation}
for all curve $\widetilde v:[a,b]\to\R^n$ of class $C^1$. Here $\Ddt$ is the covariant derivative along $\gamma$.
From an abstract viewpoint, $\varpi$ is the pull-back by $p:[a,b]\to\mathrm{FR}(TM)$ of the connection form
of the $\mathrm{GL}(n,\R)$-principal fiber bundle of all frames of $TM$.
\end{itemize}
The functions $\widetilde g$ and $\varpi$ are related by the following equality:
\begin{equation}\label{eq:relvarpitildeg}
\widetilde g'=\widetilde g\varpi+\varpi^*\widetilde g.
\end{equation}
Note that if the trivialization \eqref{eq:trivialization} is orthogonal, then $\widetilde g$ is constant,
and if  \eqref{eq:trivialization} is parallel, then $\varpi=0$.

Given a smooth curve $\widetilde v:[a,b]\to\R^n$, the corresponding vector field $v$ along $\gamma$ defined by
$v(t)=p(t)\widetilde v(t)$ is Jacobi if and only if $\widetilde v$ satisfies the second order linear equation:
\begin{equation}\label{eq:JacobiRn}
\ddt\big[\widetilde v'+\varpi\widetilde v\big]+\varpi\widetilde v'+\varpi^2\widetilde v+\widetilde R\widetilde v=0.
\end{equation}
Setting
\begin{equation}\label{alfa}
\alpha=\widetilde g\big[\widetilde v'+\varpi\widetilde v\big]:[a,b]\to{\R^n}^*
\end{equation}
Eq.~\eqref{eq:JacobiRn} can be rewritten as the system:
\[\begin{pmatrix}\widetilde v\cr\alpha\end{pmatrix}'=\begin{pmatrix}-\varpi&\widetilde g^{-1}\cr
-\widetilde g\widetilde R&\widetilde g'\widetilde g^{-1}-\widetilde g\varpi\widetilde g^{-1}\end{pmatrix}
\begin{pmatrix}\widetilde v\cr\alpha\end{pmatrix},\]
and using \eqref{eq:relvarpitildeg} this is the symplectic system:
\begin{equation}\label{eq:symplsystemassociatedgamma}
\begin{pmatrix}\widetilde v\cr\alpha\end{pmatrix}'=\begin{pmatrix}-\varpi&\widetilde g^{-1}\cr
-\widetilde g\widetilde R&\varpi^*\end{pmatrix}
\begin{pmatrix}\widetilde v\cr\alpha\end{pmatrix}.
\end{equation}
\begin{prop}\label{thm:changeoftrivialization}
The Maslov index of the symplectic system \eqref{eq:symplsystemassociatedgamma} does
\emph{not} depend on the choice of the trivialization $p$ in \eqref{eq:trivialization}.
\end{prop}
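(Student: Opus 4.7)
The plan is to exhibit a change of trivialization as an isomorphism between the two associated symplectic systems in the sense of \eqref{eq:ABC}, and then to invoke the fact, established just above, that isomorphic symplectic systems share the same Maslov index.

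Let $p_1,p_2$ be two smooth ($C^2$) trivializations of $\gamma^*TM$ as in \eqref{eq:trivialization}. Since for each $t$ both $p_i(t):\R^n\to T_{\gamma(t)}M$ are linear isomorphisms, there is a unique $C^2$ map $Z:[a,b]\to\GL(\R^n)$ with $p_2(t)=p_1(t)Z(t)^{-1}$. Direct computation then yields the transformation rules
\[
\widetilde g_2=(Z^{-1})^*\widetilde g_1\,Z^{-1},\qquad \widetilde R_2=Z\widetilde R_1Z^{-1},
\]
while the Leibniz rule applied to $\Ddt\bigl[p_1(t)Z(t)^{-1}\widetilde v(t)\bigr]$, together with $(Z^{-1})'=-Z^{-1}Z'Z^{-1}$, gives the gauge-type identity
\[
\varpi_2=Z\varpi_1Z^{-1}-Z'Z^{-1}.
\]

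Writing the system \eqref{eq:symplsystemassociatedgamma} in the block form used in \eqref{eq:ABC}, i.e.\ with $A_i=-\varpi_i$, $B_i=\widetilde g_i^{-1}$ and $C_i=-\widetilde g_i\widetilde R_i$, the three identities above are exactly the isomorphism relations \eqref{eq:ABC} for the chosen $Z$ and for $W\equiv 0$. Therefore the symplectic systems attached to $p_1$ and $p_2$ are isomorphic, and by the invariance property recalled above they have the same Maslov index. The only delicate step is the formula for $\varpi_2$, which is nothing but the gauge-transformation law of the connection $1$-form on the frame bundle $\mathrm{FR}(TM)$ under the change of section $p_2=p_1Z^{-1}$; once this is in hand, matching block entries in \eqref{eq:ABC} is a routine verification that requires no non-degeneracy hypothesis on $\gamma$.
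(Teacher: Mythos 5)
Your proposal is correct and follows essentially the same route as the paper: both realize the change of trivialization as an isomorphism of symplectic systems in the sense of \eqref{eq:ABC} with $W\equiv 0$ and a suitable $Z$ (your $Z=p_2^{-1}p_1$ is the paper's $K^{-1}$ up to which system is labelled first), and then invoke the invariance of the Maslov index under such isomorphisms. The transformation laws you state for $\widetilde g$, $\widetilde R$ and $\varpi$ agree with the paper's, so nothing further is needed.
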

\begin{proof}
Assume that two trivializations $p(t):\R^n\to T_{\gamma(t)}M$ and $q(t):\R^n\to T_{\gamma(t)}M$ are given, and denote
by:
\begin{equation}\label{eq:duesistemi}
\begin{pmatrix}\widetilde v\cr\alpha\end{pmatrix}'=\begin{pmatrix}-\varpi&\widetilde g^{-1}\cr
-\widetilde g\widetilde R&\varpi^*\end{pmatrix}
\begin{pmatrix}\widetilde v\cr\alpha\end{pmatrix}, \quad
\begin{pmatrix}\widetilde w\cr\beta\end{pmatrix}'=\begin{pmatrix}-\tau&\widetilde h^{-1}\cr
-\widetilde h\widetilde S&\tau^*\end{pmatrix}
\begin{pmatrix}\widetilde w\cr\beta\end{pmatrix}
\end{equation}
the corresponding symplectic systems. Denote by $K(t):\R^n\to\R^n$ the isomorphism
$K(t)=q(t)^{-1}p(t)$; then, one computes easily:
\begin{align*}
&\widetilde g_t\!=\!K(t)^*\widetilde h_tK(t),\\
&\varpi(t)\!=\!K(t)^{-1}K'(t)+K(t)^{-1}\tau(t)K(t),\\
&\widetilde R(t)\!=\!K(t)^{-1}\widetilde S(t)K(t).
\end{align*}
for all $t$. Then it is easy to see that the symplectic systems in \eqref{eq:duesistemi} are isomorphic (with $Z=K^{-1}$ and $W\equiv0$),
and they have the same Maslov index.
\end{proof}
A totally analogous statement holds for the Maslov index of a geodesic relatively to an
initial orthogonal submanifold $\mathcal P$.

\begin{section}{Semi-Riemannian submersions and lifts}\label{lifts}
\subsection{Semi-Riemannian submersions}
Let $M$ and $B$  be differential manifolds having dimensions $n$ and $m$ respectively.
A submersion is a $C^\infty$ mapping $\pi:M\to B$
such that $\pi$ is of maximal rank. The implicit function theorem implies that
$\pi^{-1}(x)$ is a closed submanifold of $M$ for each $x\in B$, that we call a \emph{fiber}
of the submersion. When
 $M$ and $B$ are semi-Riemannian manifolds it is convenient to consider a special class
 of submersions, where one can relate in a good way the geometry of $M$ with the geometry
 of the fibers and of $B$. In particular, we  assume that the fibers  are  non-degenerate submanifolds, so that for
 every point $p\in M$ one has a decomposition of the tangent space $T_pM$ as an orthogonal direct sum:
\begin{equation}\label{eq:defcalVcalH}
T_pM=\V T_pM+\Hor T_pM.
\end{equation}
 Here $\V T_pM$ denotes the subspace of vectors tangent to the fiber through $p$  and
 $\Hor T_pM$ the orthogonal vectors to $\V T_pM$. These subspaces are respectively called
 \emph{vertical} and \emph{horizontal} subspaces, and we will denote by $\V$ and $\Hor$ the projections
 to the vertical and horizontal subspaces; $\V$ and $\Hor$ are smooth sections of the
vector bundle $\mathrm{Lin}(TM)$ of endomorphisms of $TM$.
\begin{defin}
Let $M$ and $B$ be semi-Riemannian manifolds, with metric tensors $g$ and $h$ respectively.
A semi-Riemannian submersion is a
submersion $\pi:M\rightarrow B$ such that\footnote{%
Observe that, in fact, (S2) implies (S1); namely, if $\mathrm d\pi$ is an isometry, then the horizontal
subspaces $\Hor T_pM$ are non-degenerate, and this implies that also the vertical subspaces, which are
their orthogonal, are non-degenerate.
However, keeping in mind the non-degeneracy of the fibers is important,
and it is useful to maintain (S1) in the definition of semi-Riemannian submersions.}
for every $p\in M$ (see \cite[p. 212]{One83})
\begin{enumerate}
\item[(S1)] the fiber $\pi^{-1}(x)$ is non degenerate ($ x=\pi(p)$)
\item[(S2)]  the differential map $\df\pi$
restricted to the horizontal subspace
\[\df\pi_p:\Hor T_pM\rightarrow T_xB\]
is an isometry.
\end{enumerate}
\end{defin}
There are very many situations where one has a natural semi-Riemannian submersion structure in a geometrical
problem; we will be interested in the following two examples.
\begin{example}[Stationary spacetimes]\label{exa:statspacetimesubm}
Let $(M,g)$ be a {\it Lorentzian manifold}, that is, a semi-Riemannian metric $g$ of index $1$.
We say that $M$ is {\it stationary} when there exists a timelike Killing field $Y$, i.e., a vector field $Y$,
with $g(Y,Y)<0$ and  ${\mathcal L}_Yg=0$, where ${\mathcal L}_Y$ is the Lie derivative with respect to $Y$.
The vector field $Y$ gives a timelike orientation to $(M,g)$, so that it becomes a spacetime.
We will consider the class of \emph{standard} stationary spacetimes, that is, those that can
be written as $(S\times\R,g)$ with
\[g\big((\xi,\tau),(\xi,\tau)\big)=g_0(\xi,\xi)+2\tau g_0(\delta,\xi)-\beta \tau^2,\]
where $\xi\in  TS$, $\tau\in \R$, $g_0$ is a Riemannian metric in $S$ and $\beta$ and $\delta$ are
respectively a positive function and a vector field in $S$. In this case, the timelike vector field
$Y$ is $\partial_t$ (where $t$ is the variable of $\R$).
If the stationary spacetime does not have too bad causal properties (more precisely, if it is
\emph{distinguishing}) and the timelike Killing field is complete, then it admits a standard splitting (see \cite{JS08}). We always can associate
a semi-Riemannian submersion to a  standard stationary spacetime $(S\times\R,g)$. Indeed,
the projection $\pi:(S\times\R,g)\rightarrow (S,\tilde{g})$ is a
semi-Riemannian submersion, where $\tilde{g}$ is the \emph{Riemannian}
metric defined by
$\tilde{g}(\xi,\xi)=g_0(\xi,\xi)+\frac{1}{\beta} g_0(\xi,\delta)^2$ for every $\xi\in TS$.
\end{example}

\begin{example}[Kaluza-Klein spacetimes]\label{exa:kk}
Let $\pi\colon M\to B$ be  a smooth principal $G$-bundle, $h$
a Lorentzian metric on $B$, $\bar g$ a $G$-invariant Riemannian metric on
the model space of the fibers, then by using a connection on  $M$, a
{\em Kaluza-Klein  metric}   $g$  on  $M$  is given by
\[g(X,X)= h(\df\pi(X),\df\pi(X))+\bar g(V,V),\]
where $V$ is the vertical component of $X$ (see \cite{Bourgu89}).
The projection $\pi\colon (M,g)\to (B,h)$ is  a Lorentzian submersion.
\end{example}
\begin{rem}
It is well known (see \cite{Herman60}) that if $(M,g)$  is a connected complete Riemannian manifold and $\pi\colon (M,g) \to (B,h)$ is a smooth surjective  Riemannian submersion, such that all the fibers are totally geodesic submanifolds, then all the fibers are isometric and $\pi$ is a smooth $G$-bundle with structure group $G$, the Lie group of the isometries of the fiber.
That result has been used to study ``Riemannian'' Kaluza-Klein theory taking a  submersion as starting point (see \cite[\S 4]{Hogan84} and also \cite[p.152-153]{Bourgu89}). It would be interesting to get a similar geometric characterization of a Kaluza-Klein spacetime (i.e. when
$\pi \colon (M,g)\to (B,h)$ is a Lorentzian submersion as in Example~\ref{exa:kk}).
Clearly if any curve in $B$  can be lifted to a globally defined  horizontal curve, then  $\pi$  is a fiber bundle and one can repeat the proof in \cite{Herman60} to get that $\pi$ is actually a smooth $G$-bundle.
\begin{prop}\label{hermanngener}
Let $(M,g)$ and $(B,h)$  be two connected semi-Riemannian manifolds and let $\pi\colon  (M,g) \to (B,h)$  be   a smooth  surjective  semi-Riemannian submersion  such that, for any $x\in B$, $\pi^{-1}(x)$ is totally geodesic.   Assume that either one of the following two assumptions is satisfied:
\begin{enumerate}
 \item $(M,g)$ is geodesically complete,
\item the fibers $\pi^{-1}(x)$ are compact and connected, for  each $x\in B$.
\end{enumerate}
Then $\pi$ is a smooth $G$-bundle.
\end{prop}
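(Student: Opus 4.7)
The plan is to construct local trivializations of $\pi$ directly via horizontal geodesic transport from a fixed fiber, thereby showing that $\pi$ is a fiber bundle. The remark preceding the statement then invokes Hermann's argument from \cite{Herman60}, which uses only the totally geodesic fiber hypothesis to show that horizontal transport between nearby fibers is by isometries, to upgrade the fiber bundle to a smooth $G$-bundle with $G$ the isometry group of the typical fiber.

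Fix $y_0\in B$ and a geodesically convex normal neighborhood $V\subset B$ of $y_0$. For each $y\in V$ and each $q\in\pi^{-1}(y_0)$, property $(S2)$ gives a unique horizontal vector $v^\star_q\in\Hor T_qM$ satisfying $\df\pi_q(v^\star_q)=\exp_{y_0}^{-1}(y)$. Define
\[
\Phi\colon\pi^{-1}(y_0)\times V\to\pi^{-1}(V),\qquad \Phi(q,y):=\exp_q(v^\star_q).
\]
For this to make sense the relevant exponentials must be defined: under $(1)$ this is automatic by geodesic completeness of $M$; under $(2)$, compactness of the fiber $\pi^{-1}(y_0)$ furnishes a uniform radius on which $\exp_q$ is defined for every $q\in\pi^{-1}(y_0)$, and one shrinks $V$ accordingly. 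The identity $\pi\circ\Phi(q,y)=y$ holds because horizontal geodesics of $M$ stay horizontal and project to geodesics of $B$ (a standard consequence of the O'Neill formulas).

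To see that $\Phi$ is a diffeomorphism I would verify injectivity and surjectivity by combining uniqueness and existence of horizontal lifts of geodesics with a time-reversal argument. If $\Phi(q_1,y)=\Phi(q_2,y)=p$, the two horizontal geodesics from $q_i$ to $p$ reverse to horizontal geodesics issued from $p$ with identical initial horizontal velocity (the horizontal lift at $p$ of the reversed base velocity at $y$), and hence coincide by ODE uniqueness, yielding $q_1=q_2$. For surjectivity, given $p\in\pi^{-1}(y)$, the horizontal geodesic issued backwards from $p$ with that same initial velocity is defined on $[0,1]$ — globally in $(1)$ by completeness, after shrinking $V$ in $(2)$ — and terminates at some $q\in\pi^{-1}(y_0)$ with $\Phi(q,y)=p$. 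Smoothness of $\Phi$ and its inverse is inherited from that of the exponential map; varying $y_0$ and $V$ over $B$ produces an atlas of local trivializations, so $\pi$ is a fiber bundle, and Hermann's argument yields the $G$-bundle structure.

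The main obstacle is the surjectivity step in case $(2)$, where $\pi^{-1}(V)$ is not \emph{a priori} compact and the backward horizontal geodesic from every $p\in\pi^{-1}(V)$ must be controlled. I would handle this by a standard open–closed argument on the image of $\Phi$: openness comes from $\Phi$ being a local diffeomorphism (its differential at $y=y_0$ is easily identified), and closedness from the fact that compactness and connectedness of the fibers force limits of sequences $\Phi(q_n,y_n)$ with $y_n\to y\in V$ to lie in the image. The subsequent verification that the transition maps between distinct trivializations lie in the isometry group $G$ of the typical fiber is Hermann's original argument, depending only on the vanishing of the O'Neill tensor $T$ (totally geodesic fibers), and carries over to the semi-Riemannian setting unchanged.
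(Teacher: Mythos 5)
Your proposal is correct in substance, and it establishes local triviality by a route that differs from the paper's. The paper's proof is a reduction: it observes that Hermann's Riemannian argument goes through once one knows that horizontal lifts of arbitrary curves in $B$ are globally defined, supplies that ingredient by piecewise-geodesic approximation under hypothesis (1) and by citing Ehresmann under hypothesis (2), and then obtains the isometry of the fibers by showing that the energy of a vertical curve is constant along a variation with horizontal variational field. You instead build the trivializations explicitly as $\Phi(q,y)=\exp_q(v^\star_q)$ over a normal neighborhood, which is essentially Hermann's original construction; this buys a self-contained case (1) (horizontal lifts of radial geodesics are geodesics of $M$, so completeness suffices, and the inverse is the explicit backward transport, manifestly smooth) and replaces the Ehresmann citation in case (2) by an open--closed argument, which can even be run fiberwise in each connected $\pi^{-1}(y)$, $y\in V$. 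Two steps deserve more care. First, in case (2) the openness of the image should not rest on ``$\Phi$ is a local diffeomorphism'', which is clear only near $y=y_0$ (away from $y_0$ the differential of $\Phi$ is governed by focal points of the fiber $\pi^{-1}(y_0)$); it is cleaner to note that the image of $\Phi$ coincides with the open domain of definition of the backward transport $\Psi$, since $\Phi\circ\Psi=\mathrm{id}$ and $\Psi\circ\Phi=\mathrm{id}$ wherever both are defined. For closedness you should shrink $V$ so that $\Phi$ is defined and continuous on $\pi^{-1}(y_0)\times\overline{V}$ with $\overline{V}$ compact, so that the limit $\Phi(q,y)$ along your convergent subsequence is actually defined. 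Second, the totally geodesic hypothesis enters only in the final step, and ``Hermann's argument carries over unchanged'' glosses over the one semi-Riemannian adjustment the paper makes explicit: one shows that the \emph{energy} (not the length) of every vertical curve is preserved under the horizontal variation, which by polarization still determines the induced metric on the fibers and hence yields the isometry group as structure group.
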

\begin{proof}
 As commented in the above remark, the only conditions necessary to repeat the Riemannian proof of Hermann \cite{Herman60}  in the semi-Riemannian context is the global definition of horizontal liftings. Assuming condition $(1)$ in the proposition, this can be shown by approximating a given curve  $\beta$  in $B$  by piecewise geodesics that, by the geodesic completeness, can be lifted to $M$. The sequence of lifts  converges,  up to consider a subsequence,   to the horizontal lift of $\beta$.  When assuming condition $(2)$, we can use a result by Ehresmann \cite[p.31]{Ehresm51} to show the existence of horizontal global lifts. To show that the fibers are isometric, for any vertical curve $\alpha$ we can consider a variation $(-\epsilon,\epsilon)\ni s\mapsto \alpha_s$ defined by a horizontal variational vector field. Analogously to the proof of \cite[Proposition 3.3]{Herman60} we can see that the energy of the curves $\alpha_s$ in the variation is constant and as a consequence the fibers are isometric.
\end{proof}
\end{rem}
\subsection{Fundamental tensors of a submersion}
\label{sub:fundtensors}
In \cite{One66}, B. O'Neill introduced the fundamental tensors $T$ and $A$ associated to
a semi-Riemannian submersion $\pi:M\to B$, defined as follows.
We denote  by $\nabla$ the Levi-Civita connection of $M$ and $\nabla^*$ the Levi-Civita connection
of $B$. Then for vectors fields
$E$ and $F$ in ${\mathfrak X}(M)$,
\begin{equation}\label{eq:deftensorT}
T_E(F)=\Hor \nabla_{\V E}(\V F)+\V \nabla_{\V E}(\Hor F).
\end{equation}
The other tensor can be thought as a dual tensor of $T$. In fact, it can be obtained
by reversing the role of $\V$ and $\Hor$. Explicitly,
\begin{equation}\label{eq:deftensorA}
A_E(F)=\Hor\nabla_{\Hor E}(\V F)+\V \nabla_{\Hor E}(\Hor F).
\end{equation}
The main properties of the fundamental tensors are the following:
\begin{enumerate}
\item
$T_E$ and $A_E$ are skew-symmetric operators on $T_pM$ for every $p\in M$,
\item $T$ is symmetric for vertical vector fields $V$ and $W$, i.e., $T_V(W)=T_W(V)$ and $A$ is alternating for horizontal vector fields $X$ and
$Y$, i.e., $A_X(Y)=-A_Y(X)$,
\item when restricted to vertical vector fields
$T$ coincides with the second fundamental form of the fibers, while $A$ coincides with
the integrability tensor of the horizontal distribution of $M$ when restricted to pairs of horizontal vectors.
\end{enumerate}
\subsection{The tangent space to a non-degenerate distribution}
Let us recall a few generalities on distributions. For $v\in TM$, consider the decomposition of $T_v(TM)=\mathrm{Hor}_v\oplus\mathrm{Ver}_v$
into the horizontal and vertical subspaces determined by the Levi--Civita connection of $g$.\footnote{Note that
this notion of horizontality and verticality of vectors in $TTM$ should not be confused with the notion of
horizontality of vectors in $TM$ associated to the distribution $\Hor T_pM$.} Assume that $\mathcal D\subset TM$ is any non-degenerate distribution
(i.e., $g\vert_{\mathcal D\times\mathcal D}$ is non-degenerate), and denote by
$\mathcal S^\mathcal D:TM\times\mathcal D\to\mathcal D^\perp$ its \emph{second fundamental form}, defined by:
\[\mathcal S_x^\mathcal D(v,w)=\mathbf p^\perp\big(\nabla_vW),\]
where $x\in M$, $v\in T_xM$, $w\in\mathcal D_x$, $W$ is any extension of $w$ to a local section of $\mathcal D$ and $\mathbf p^\perp$ is the
projection onto the orthogonal of $\mathcal D$.
Let now $x_0\in M$ and $v_0\in\mathcal D_{x_0}$ be fixed; we want to determine the tangent space
$T_{v_0}\mathcal D$, where $\mathcal D$ is considered merely as a submanifold of $TM$.
If $\left]-\varepsilon,\varepsilon\right[\ni t\mapsto v(t)\in\mathcal D$ is any smooth curve
with $v(0)=v_0\in\mathcal D_{x_0}$ and $x(t)=\pi^M\big(v(t)\big)$ is its projection onto $M$, then the horizontal and the vertical
component of the tangent vector $\dot v(0)\in T_{v_0}(TM)=\mathrm{Hor}_{v_0}\oplus\mathrm{Ver}_{v_0}$ are given respectively
by $\dot x(0)$ and $\Ddt v(0)$, where $\Ddt$ is the covariant derivative along the curve $x$.
Here, $\pi^M:TM\to M$ denotes the canonical projection.
By definition, \[\mathbf p^\perp\big(\Ddt v(0)\big)=\mathcal S_{x_0}^\mathcal D\big(\dot x(0),v_0\big).\]
\begin{lem}\label{thm:tangdistr}
For $v_0\in\mathcal D_{x_0}$, the tangent space $T_{v_0}\mathcal D$ is given by:
\begin{equation}\label{eq:tangentdistribution}
T_{v_0}\mathcal D=\big\{(u_1,u_2)\in\mathrm{Hor}_{v_0}\oplus\mathrm{Ver}_{v_0}:\mathbf p^\perp(u_2)=\mathcal S_{x_0}^\mathcal D(u_1,v_0)\big\}.
\end{equation}
\end{lem}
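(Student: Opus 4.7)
The first inclusion $T_{v_0}\mathcal D \subseteq \{(u_1,u_2): \mathbf p^\perp(u_2)=\mathcal S^{\mathcal D}_{x_0}(u_1,v_0)\}$ is essentially contained in the discussion immediately preceding the statement. Indeed, given any tangent vector $\xi\in T_{v_0}\mathcal D$, represent it as $\dot v(0)$ for a smooth curve $t\mapsto v(t)\in\mathcal D$ with $v(0)=v_0$, and set $x(t)=\pi^M(v(t))$. The general fact about the canonical splitting of $T(TM)$ induced by a linear connection gives that the horizontal and vertical components of $\dot v(0)$ are $\dot x(0)$ and $\Ddt v(0)$, respectively. Since $v(t)$ is a local section of $\mathcal D$ along $x(t)$, the very definition of $\mathcal S^{\mathcal D}$ yields $\mathbf p^\perp\big(\Ddt v(0)\big)=\mathcal S^{\mathcal D}_{x_0}\big(\dot x(0),v_0\big)$, which is the required condition.

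For the reverse inclusion I would use a dimension count. Let $k$ be the rank of the distribution $\mathcal D$. Since $\mathcal D$ is a rank-$k$ vector subbundle of $TM$ over the $n$-dimensional manifold $M$, it is an embedded submanifold of $TM$ of dimension $n+k$, so $\dim T_{v_0}\mathcal D=n+k$. On the other hand, the right-hand side of \eqref{eq:tangentdistribution} is the kernel of the linear map
\[
\mathrm{Hor}_{v_0}\oplus\mathrm{Ver}_{v_0}\longrightarrow \mathcal D_{x_0}^\perp,\qquad (u_1,u_2)\longmapsto \mathbf p^\perp(u_2)-\mathcal S^{\mathcal D}_{x_0}(u_1,v_0).
\]
This map is surjective (it is already surjective when restricted to the second factor, since $\mathbf p^\perp:\mathrm{Ver}_{v_0}\cong T_{x_0}M\to \mathcal D_{x_0}^\perp$ is surjective), so its kernel has dimension $2n-(n-k)=n+k$. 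The dimensions match, and the inclusion $\subseteq$ already established then forces equality.

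The only step requiring a moment's care is verifying that $\mathcal S^{\mathcal D}_{x_0}(u_1,v_0)$ depends only on $v_0$ and not on the local extension used to compute $\nabla_{u_1}W$; but this is precisely the well-known tensoriality of the second fundamental form, and is implicit in the definition given above the lemma. No explicit construction of curves is needed, although an alternative, and equally short, argument proceeds by fixing a local frame $e_1,\ldots,e_k$ of $\mathcal D$ near $x_0$, picking any curve $x(t)$ with $\dot x(0)=u_1$, writing $v(t)=\sum f_i(t)e_i(x(t))$ with $f_i(0)$ determined by $v_0$, and choosing $f_i'(0)$ so that the $\mathcal D$-component of $\Ddt v(0)$ matches that of $u_2$; the condition on $\mathbf p^\perp(u_2)$ then guarantees that the $\mathcal D^\perp$-component automatically matches as well.
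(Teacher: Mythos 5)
Your proof is correct and follows essentially the same route as the paper: the inclusion $T_{v_0}\mathcal D\subset C_{v_0}$ is taken from the discussion preceding the lemma, and equality follows by a dimension count. You simply make explicit the count (both spaces have dimension $n+k$, the right-hand side being the kernel of a surjective map onto $\mathcal D_{x_0}^\perp$) that the paper's proof leaves to the reader.
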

\begin{proof}
Denote by $C_{v_0}\subset T_{v_0}(TM)$ the right-hand side of \eqref{eq:tangentdistribution}.
We have shown above the inclusion $T_{v_0}\mathcal D\subset C_{v_0}$. Counting dimensions
one obtains immediately $T_{v_0}\mathcal D=C_{v_0}$.
\end{proof}

\subsection{Horizontal curves and infinitesimally horizontal vector fields}
\label{sub:horinfhor}
Let us now consider the case of a distribution $\mathcal D$ which is the horizontal distribution of
a semi-Riemannian submersion $\pi:M\to B$. In this case, the projection operator $\mathbf p^\perp$
coincides with the operator $\mathcal V$ defined in \eqref{eq:defcalVcalH}, and using \eqref{eq:deftensorT} and
\eqref{eq:deftensorA} one obtains easily the following equality:
\begin{equation}\label{eq:secffAT}
\mathcal S^\mathcal D(v,w)=A_v(w)+T_v(w)
\end{equation}
for all $v\in TM$ and all $w\in\mathcal D$.

Given a $C^1$ curve $\beta:[a,b]\to B$, a \emph{horizontal lift}
of $\beta$ is a curve $\gamma:[a,c]\subset [a,b]\to M$ such that $\pi\circ\gamma=\beta$
and $\dot\gamma(s)$ is horizontal for all $s\in [a,c]$.
\begin{prop}\label{thm:exlift}
Let $\pi:M\to B$ be a semi-Riemannian submersion and let $\mathcal D=\Ker(\mathrm d\pi)^\perp$ be the horizontal distribution of $\pi$. Given any $C^1$-curve $\beta:[a,b]\to B$ and any $p\in\pi^{-1}\big(\beta(a)\big)$,
there exists a unique maximal horizontal lift $\gamma:\left[a,c\right[\to M$ of $\beta$, with $c\le b$, satisfying
$\gamma(a)=p$. Such $\gamma$ has the same regularity as $\beta$.
\end{prop}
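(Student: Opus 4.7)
The plan is to reduce the statement to a classical ODE existence/uniqueness result using submersion charts. By the local form of a submersion, around each $q\in M$ there is a chart $\phi:U\subset M\to V\times W\subset\R^m\times\R^{n-m}$ in which $\pi$ reads as the projection onto the first factor, so that the fibers of $\pi$ appear as the slices $\{x\}\times W$. Since the horizontal distribution $\mathcal D=\Ker(\df\pi)^\perp$ is, by (S1), everywhere complementary to the vertical distribution, a dimension count shows that in these coordinates $\mathcal D$ is the graph of a smooth field of linear maps: there exists a smooth $\mathcal A:V\times W\to\mathrm{Lin}(\R^m,\R^{n-m})$ with
$$\mathcal D_{(x,y)}=\{(\xi,\mathcal A(x,y)\xi):\xi\in\R^m\}.$$

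Writing a candidate lift in such a chart as $\gamma(s)=(x(s),y(s))$, the condition $\pi\circ\gamma=\beta$ forces $x(s)=\beta(s)$, while the horizontality requirement $\dot\gamma(s)\in\mathcal D_{\gamma(s)}$ becomes the nonautonomous Cauchy problem
$$\dot y(s)=\mathcal A(\beta(s),y(s))\,\dot\beta(s),\qquad y(a)=y_0,$$
with $y_0$ the $W$-component of $p$. Since $\beta$ is $C^1$ and $\mathcal A$ is smooth, the right-hand side is continuous in $s$ and locally Lipschitz (in fact smooth) in $y$, so Cauchy--Lipschitz provides a unique local $C^1$-solution. If $\beta$ is of class $C^k$ (or smooth, or real analytic), the standard dependence theorems upgrade the regularity of $y$ accordingly, so $\gamma$ inherits the regularity of $\beta$.

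To pass from local lifts to a maximal one, let $c^\ast$ be the supremum of those $c\in(a,b]$ for which a horizontal lift of $\beta\vert_{[a,c]}$ starting at $p$ exists. Uniqueness on overlaps---which follows from the connectedness argument that the coincidence set of two horizontal lifts with the same initial value is closed by continuity and open by the local ODE uniqueness above---allows one to glue all such local lifts into a single horizontal curve $\gamma:[a,c^\ast)\to M$, which is the desired maximal lift. The only point requiring any care is the preliminary check that in every submersion chart $\mathcal D$ is the graph of a smooth matrix-valued function; this rests on the transversality of $\mathcal D$ to the fibers guaranteed by (S1), after which everything reduces to textbook ODE theory.
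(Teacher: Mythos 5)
Your proposal is correct and follows essentially the same route as the paper's proof: pass to the local normal form of the submersion, observe that transversality to the fibers makes $\mathcal D$ the graph of a smooth $\mathrm{Lin}(\R^m,\R^{n-m})$-valued map, and reduce horizontality to the Cauchy problem $\dot y = \mathcal A(\beta,y)\dot\beta$ handled by standard ODE theory. Your explicit gluing/maximality argument and the remark on regularity are details the paper leaves to the reader, but the underlying argument is identical.
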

\begin{proof}
Using the local form of a submersion, for the local lifting problem it is not restrictive to assume
that $M=U\times V$ is the product of an open subset $U\subset\R^k$, $k=n-m$ and an open subset $V\subset\R^m$, such
that $B=V$ and $\pi:U\times V\to V$ is the projection onto the second factor. The horizontal distribution of the submersion is a distribution $\mathcal D$
on $U\times V$ such that, at each point $(x,y)\in U\times V$, $\mathcal D_{(x,y)}$ is a subspace
of $\R^n$ which is complementary to $\R^k\oplus\{0\}^m$, hence it is the graph of a linear
map $F(x,y):\R^m\to\R^k$.  The map $U\times V\ni(x,y)\mapsto F(x,y)\in\mathrm{Lin}(\R^m,\R^k)$ is
smooth. Given a curve $x:[a,b]\to V$, a lifting of $x$ is a curve $(x,y):[a,b]\to U\times V$
where $y$ satisfies the following ODE:
\[y'(t)=F\big(x(t),y(t)\big) x'(t).\]
The thesis follows easily using standard existence and uniqueness results for ODE's in $\R^n$.
\end{proof}
Let us study the problem of lifting horizontally a $1$-parameter family of curves
by considering the following situation.

Let us assume we are given a $C^1$-map $\Psi:[a,b]\times\left]-\varepsilon,\varepsilon\right[\to B$
and a $C^1$-curve $\eta:\left]-\varepsilon,\varepsilon\right[\to M$ such that $\pi\circ\eta=\Psi(a,\cdot)$.
Proposition~\ref{thm:exlift} gives us the existence of a map $\Gamma:\mathcal A\to M$ defined on an open
subset $\mathcal A$ of $\R^2$ that contains the segment $\{a \}\times\left]-\varepsilon,\varepsilon\right[$
such that $\pi\big(\Gamma(t,s)\big)=\Psi(t,s)$ for all $(t,s)\in\mathcal A$, and with the property that
the curve $t\mapsto\Gamma(t,s)$ is of class $C^1$ for all $s\in\left]-\varepsilon,\varepsilon\right[$.
Such map $\Gamma$ is defined by the property that for every $s\in\left]-\varepsilon,\varepsilon\right[$,
the map $t\mapsto\Gamma(t,s)$ is a maximal horizontal lift of the curve $t\mapsto\Psi(t,s)$ satisfying
$\Gamma(a,s)=\eta(s)$. Let us denote by $\gamma$ the curve $t\mapsto\Gamma(t,0)$.
\begin{prop}\label{thm:varhoriz}
With the notations above, we have:
\begin{itemize}
\item[(a)] the map $\Gamma$ is $C^1$ in $\mathcal A$.
\end{itemize}
If $\mathcal A$ contains the segment $[a,b]\times\{0\}$, then:
\begin{itemize}
\item[(b)] $\mathcal A$ contains the rectangle $[a,b]\times[-\delta,\delta]$ for some $\delta\in\left]0,\varepsilon\right[$;
\item[(c)] the variational vector field $E=\frac{\partial}{\partial s}\big\vert_{s=0}\Gamma(t,s)$ along $\gamma$ satisfies
the identity:
\begin{equation}\label{eq:infhorizontality}
\V\Ddt E=\mathcal S_{\gamma(t)}^\mathcal D\big(E(t),\dot\gamma(t)\big)=A_E(\dot\gamma)+T_{E}(\dot\gamma)=
A_{\Hor E}(\dot\gamma)+T_{\V E}(\dot\gamma).
\end{equation}
\end{itemize}
\end{prop}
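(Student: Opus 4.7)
The plan is to treat the three items in order, exploiting the local model for a submersion used already in the proof of Proposition~\ref{thm:exlift}, together with Lemma~\ref{thm:tangdistr} for the concluding identity.

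For part (a), I would work locally, writing $M=U\times V$ with $\pi$ the projection onto the second factor, as in the proof of Proposition~\ref{thm:exlift}. The horizontal distribution is then given by a smooth map $F:U\times V\to\mathrm{Lin}(\R^m,\R^k)$, and writing $\Gamma(t,s)=\big(y(t,s),\Psi(t,s)\big)$ the horizontal lifting condition becomes the parameter-dependent ODE
\begin{equation*}
\tfrac{\partial y}{\partial t}(t,s)=F\big(y(t,s),\Psi(t,s)\big)\,\tfrac{\partial\Psi}{\partial t}(t,s),\qquad y(a,s)=y_0(s),
\end{equation*}
where $y_0$ is the $U$-component of $\eta$. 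Since $\Psi$, $\partial_t\Psi$ and $y_0$ are $C^1$ in their arguments, the standard theorem on $C^1$ dependence of solutions of ODEs on initial data and parameters yields $C^1$ regularity of $(t,s)\mapsto y(t,s)$, hence of $\Gamma$, on $\mathcal A$.

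For part (b), it is a purely topological step: $\mathcal A\subset\R^2$ is open and contains the compact set $[a,b]\times\{0\}$, so by compactness a tube $[a,b]\times[-\delta,\delta]$ fits inside $\mathcal A$ for some $\delta\in\left]0,\varepsilon\right[$.

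Part (c) is the geometric heart of the statement, and the place where I expect the only real subtlety, namely in correctly identifying horizontal and vertical components in $T(TM)$ versus those in $TM$. I would argue as follows. For each fixed $t$, the curve $s\mapsto v(s):=\tfrac{\partial\Gamma}{\partial t}(t,s)\in\mathcal D$ is well defined by the horizontality of the lifts, so $\dot v(0)\in T_{\dot\gamma(t)}\mathcal D$. Its projection onto $M$ is $s\mapsto\Gamma(t,s)$, whose derivative at $s=0$ is $E(t)$; hence the $\mathrm{Hor}_{\dot\gamma(t)}$-component of $\dot v(0)$ in $T(TM)$ is $E(t)$, while its $\mathrm{Ver}_{\dot\gamma(t)}$-component is $\tfrac{\mathrm D}{\mathrm ds}\big|_{s=0}v(s)=\Ddt E(t)$ by the symmetry of mixed covariant derivatives. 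Applying Lemma~\ref{thm:tangdistr} gives
\begin{equation*}
\V\Ddt E(t)=\mathcal S^{\mathcal D}_{\gamma(t)}\big(E(t),\dot\gamma(t)\big),
\end{equation*}
and combining with \eqref{eq:secffAT} produces the middle equality of \eqref{eq:infhorizontality}. The last equality is then immediate from the definitions \eqref{eq:deftensorT}--\eqref{eq:deftensorA}, which show that $A_E=A_{\Hor E}$ and $T_E=T_{\V E}$ on any vector field, so $A_E(\dot\gamma)+T_E(\dot\gamma)=A_{\Hor E}(\dot\gamma)+T_{\V E}(\dot\gamma)$, concluding the proof.
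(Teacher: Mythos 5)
Your proof is correct and follows essentially the same route as the paper's: parts (a) and (b) rest on standard ODE dependence and the openness of $\mathcal A$ (you merely make the local model and the tube argument explicit), and part (c) identifies $\frac{\partial}{\partial s}\big\vert_{s=0}\frac{\partial}{\partial t}\Gamma$ with the pair $\big(E,\Ddt E\big)\in T_{\dot\gamma(t)}\mathcal D$ via the symmetry of mixed covariant derivatives and then applies Lemma~\ref{thm:tangdistr} together with \eqref{eq:secffAT}, exactly as in the paper.
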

\begin{proof}
Part (a) follows from standard smooth dependence results on the data for solutions of ODE's.
Also part (b) is obtained easily from standard continuity results for ODE's.
For part (c), observe that $\frac\partial{\partial t}\Gamma(t,s)\in\mathcal D_{\Gamma(t,s)}$,
and thus
\begin{multline*}\frac{\partial}{\partial s}\Big\vert_{s=0}
\frac\partial{\partial t}\Gamma(t,s)=
\Big(\frac{\partial}{\partial s}\Big\vert_{s=0}\Gamma,\frac{\mathrm D}{\mathrm ds}\Big\vert_{s=0}\frac\partial{\partial t}\Gamma(t,s)\Big)
\\=\Big(\frac{\partial}{\partial s}\Big\vert_{s=0}\Gamma,\frac{\mathrm D}{\mathrm dt}\frac{\partial}{\partial s}
\Big\vert_{s=0}\Gamma(t,s)\Big)=\big(E,\Ddt E\big)\in T_{\dot\gamma(t)}\mathcal D.
\end{multline*}
The conclusion follows now from  Lemma~\ref{thm:tangdistr} and formula \eqref{eq:secffAT}.
\end{proof}
\subsection{Basic horizontal vector fields}
A horizontal vector field is \emph{basic} when it is $\pi$-related
with a vector field on  $B$.
The following result can be found in \cite[Lemmas 1 and 3]{One66}.

\begin{lem}\label{nablas}
Let $X$ and $Y$ be horizontal vector fields and $V$ a vertical vector field. Then the following identities hold:
\begin{enumerate}
\item $\nabla_VX=\Hor \nabla_VX+T_V(X)$ (if $X$ basic, $\Hor\nabla_VX=A_X(V)$).
\item $\nabla_XV=A_X(V)+\V \nabla_XV$.
\item $\nabla_XY=\Hor \nabla_XY+A_X(Y)$.
\item $\Hor\nabla_XY$ is basic and is $\pi$-related to $\nabla^*_{\df \pi(X)}(\df\pi(Y))$.\qed
\end{enumerate}
\end{lem}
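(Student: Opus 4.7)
The proof reduces to unwinding the definitions of the O'Neill tensors against the $\V$–$\Hor$ decomposition for parts (1)--(3), and to applying Koszul's formula together with the standard calculus of $\pi$-related fields for part~(4).

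For (1), since $V$ is vertical and $X$ horizontal, the definition
$T_E(F)=\Hor\nabla_{\V E}(\V F)+\V\nabla_{\V E}(\Hor F)$
collapses to $T_V(X)=\V\nabla_V X$; identity (1) is then just the trivial decomposition $\nabla_V X=\Hor\nabla_V X+\V\nabla_V X$. Analogously, from
$A_E(F)=\Hor\nabla_{\Hor E}(\V F)+\V\nabla_{\Hor E}(\Hor F)$
one reads off $A_X(V)=\Hor\nabla_X V$ and $A_X(Y)=\V\nabla_X Y$, which give (2) and (3) by the same decomposition. For the parenthetical assertion in (1), I would argue that a basic $X$ is $\pi$-related to some vector field on $B$, while $V$ is $\pi$-related to $0$; since brackets of $\pi$-related fields are $\pi$-related, $[V,X]$ is $\pi$-related to $0$ and hence vertical. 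Combined with the torsion-free identity $\nabla_V X-\nabla_X V=[V,X]$, this yields $\Hor\nabla_V X=\Hor\nabla_X V=A_X(V)$.

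For (4), I would fix basic horizontal vector fields $X,Y$ and test against an arbitrary basic horizontal $Z$, denoting by $X^*,Y^*,Z^*$ the corresponding $\pi$-projections on $B$. The three ingredients that make Koszul's formula on $M$ collapse to Koszul's formula on $B$ are: (i) $g(U,W)=h(U^*,W^*)\circ\pi$ for any pair of basic horizontal fields, because $\df\pi$ restricted to horizontal subspaces is an isometry; (ii) $X\cdot(f\circ\pi)=(X^*\!\cdot f)\circ\pi$ for every $f\in C^\infty(B)$, as an immediate consequence of $\pi$-relatedness; and (iii) since $[X,Y]$ is $\pi$-related to $[X^*,Y^*]$, one gets $g(\Hor[X,Y],Z)=h([X^*,Y^*],Z^*)\circ\pi$, with similar identities for $g([Y,Z],X)$ and $g([Z,X],Y)$. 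Substituting into Koszul gives
\[2\,g(\nabla_X Y,Z)=2\,h(\nabla^*_{X^*}Y^*,Z^*)\circ\pi.\]
Because basic horizontal fields span $\Hor T_pM$ at every point and $\df\pi$ is an isometry there, this identity both computes $\df\pi(\Hor\nabla_X Y)=\nabla^*_{X^*}Y^*\circ\pi$ and shows that $\Hor\nabla_X Y$ depends only on $\pi(p)$, hence is basic.

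The main obstacle is the bracket computation underlying (4): one needs the naturality property that brackets of $\pi$-related fields are $\pi$-related in order to translate the bracket terms in Koszul's formula on $M$ into their counterparts on $B$, and to conclude that the horizontal part of a bracket of basic fields is itself basic. Once this is in place, (1)--(3) become tautologies and (4) is a term-by-term bookkeeping in Koszul's identity.
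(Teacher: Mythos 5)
Your proof is correct and is essentially the standard argument of O'Neill, to which the paper defers without proof (the lemma is stated with only a citation to \cite[Lemmas 1 and 3]{One66}): parts (1)--(3) follow by specializing the definitions of $T$ and $A$ to vertical/horizontal arguments, the parenthetical claim from the verticality of $[V,X]$ for basic $X$ together with torsion-freeness, and part (4) from Koszul's formula plus naturality of the Lie bracket under $\pi$-relatedness. The only point worth flagging is that in (4) the fields $X,Y$ must implicitly be basic (otherwise $\df\pi(X)$ is not a well-defined vector field on $B$), which you correctly assume.
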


Given a submanifold ${\mathcal P}\subset B$, the total lift ${\mathcal Q}=\pi^{-1}({\mathcal P})$ of
${\mathcal P}$ by the submersion $\pi$
is a submanifold of $M$, as it can be proven using the inverse mapping theorem.
If the submersion is semi-Riemannian, the total lift
of a non degenerate submanifold is also non degenerate.
 Namely, for $p\in \mathcal Q$, the tangent space $T_p\mathcal Q$ is the sum of the orthogonal
subspaces $\mathcal VT_pM$ and $\mathrm d\pi_p^{-1}\big(T_{\pi_p}\mathcal P\big)\cap\mathcal HT_pM$.
They are both non-degenerate subspaces of $T_pM$, thus $T_p\mathcal Q$ is non-degenerate.

\begin{rem}
 The differential $\df \pi$ gives an identification of horizontal vectors in $M$ with tangent
 vectors to the space $B$. Such identification will be use implicitly throughout the paper.
 We observe that with this identification, horizontal tangent vectors to $\mathcal Q$ correspond
 to tangent vectors to $\mathcal P$.
\end{rem}

We want to show that there is a relation between
the second fundamental form of ${\mathcal P}$ and the second fundamental form of its total lift.
In fact, we will consider the tensor
${\mathcal S}^{\mathcal P}:T{\mathcal P}\times T{\mathcal P}^\bot\to T{\mathcal P}$ defined as
\[{\mathcal S}^{\mathcal P}_x(v,w)=(\nabla_vW)^{\mathrm t},\]
where $x\in {\mathcal P}$, $v\in T_x{\mathcal P}$, $w\in (T_x{\mathcal P})^\bot$, $W$ is any extension of $w$ to a orthogonal vector field to $\mathcal P$ and the superscript  $\mathrm t$ denotes the tangent part
to ${\mathcal P}$.
   Analogously, we define ${\mathcal S}^{\mathcal Q}:T{\mathcal Q}\times T{\mathcal Q}^\bot\to T{\mathcal Q}$.
\begin{prop}\label{prop:secondfund}
If $V$ is a vertical vector and $X$ and $Z$ are in $\Hor (T{\mathcal Q})$, then
\begin{align}\label{ver}
{\mathcal S}^{\mathcal Q}(V,Z)&=T_V(Z)+A_Z(V)^{\mathrm t},\\
\label{hor}
{\mathcal S}^{\mathcal Q}(X,Z)&=A_X(Z)+{\mathcal S}^{\mathcal P}(X,Z),
\end{align}
where $\mathrm t$ denotes the tangent part to $\mathcal P$ for vectors in $TB$ and the tangent part to $\mathcal Q$ for vectors in $TM$.
\end{prop}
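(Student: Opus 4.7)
The plan is to compute $\mathcal S^{\mathcal Q}(\cdot,Z)=(\nabla_{\cdot}Z)^{\mathrm t}$ directly from the definition, after choosing the normal extension of $Z$ cleverly, and then identify the pieces using the O'Neill decompositions in Lemma~\ref{nablas}. The key setup step is the following: since, as remarked before the proposition, the normal bundle to $\mathcal Q$ is precisely the horizontal lift of the normal bundle to $\mathcal P$, I may extend any given normal vector to $\mathcal Q$ by first extending the corresponding vector on $B$ to a local normal field to $\mathcal P$ and then taking its horizontal lift. In other words, I will work with $Z$ \emph{basic and normal to $\mathcal Q$}. The same remark lets me choose $X$ to be basic and tangent to $\mathcal Q$ (as the horizontal lift of an extension of $\df\pi(X)$ inside $\mathcal P$). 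I will use repeatedly that the tangent part to $\mathcal Q$ (respectively, to $\mathcal P$) of a horizontal vector on $M$ corresponds under $\df\pi$ to the tangent part to $\mathcal P$ of its projection on $B$.

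To prove \eqref{ver}, I would apply Lemma~\ref{nablas}(1) to $\nabla_V Z$: since $Z$ is basic, $\Hor\nabla_V Z=A_Z(V)$, and thus $\nabla_V Z = A_Z(V)+T_V(Z)$. The summand $T_V(Z)$ is vertical, hence tangent to $\mathcal Q$, so its tangential part equals itself. The summand $A_Z(V)$ is horizontal but is not a priori tangent to $\mathcal Q$, so it contributes $A_Z(V)^{\mathrm t}$ after projection. Adding the two pieces yields exactly the right-hand side of \eqref{ver}.

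To prove \eqref{hor}, I would use Lemma~\ref{nablas}(3) to write $\nabla_X Z=\Hor\nabla_X Z+A_X(Z)$. The term $A_X(Z)$ is vertical, hence tangent to $\mathcal Q$, and it passes unchanged to $\mathcal S^{\mathcal Q}(X,Z)$. For the horizontal part, Lemma~\ref{nablas}(4) says that $\Hor\nabla_X Z$ is $\pi$-related to $\nabla^*_{\df\pi X}(\df\pi Z)$. I would now decompose this vector on $B$ into its components tangent and normal to $\mathcal P$; the tangent part is exactly $\mathcal S^{\mathcal P}(\df\pi X,\df\pi Z)$ by definition. Lifting this decomposition horizontally, the tangent-to-$\mathcal P$ part lifts to a vector tangent to $\mathcal Q$ while the normal-to-$\mathcal P$ part lifts to a vector normal to $\mathcal Q$; taking the $\mathcal Q$-tangential projection therefore keeps only the lift of $\mathcal S^{\mathcal P}(X,Z)$, giving \eqref{hor}.

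The calculations are essentially a careful application of Lemma~\ref{nablas}, so there is no deep obstacle; the only delicate point is the bookkeeping between the decomposition $TM=\V TM\oplus\Hor TM$ and the different decomposition $TM|_{\mathcal Q}=T\mathcal Q\oplus T\mathcal Q^\perp$, and the identification via $\df\pi$ between $\Hor TM|_{\mathcal Q}$ and $TB|_{\mathcal P}$ which matches the first splitting with $TB|_{\mathcal P}=T\mathcal P\oplus T\mathcal P^\perp$. Making sure that the extension of $Z$ is simultaneously basic and normal to $\mathcal Q$ is what pins the identifications down cleanly.
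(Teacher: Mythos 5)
Your proof is correct and takes essentially the same route as the paper's: both compute $(\nabla_{\cdot}Z)^{\mathrm t}$ directly from the definition, reduce to a basic normal extension of $Z$ (the paper justifies this via tensoriality of $(\Hor\nabla_VZ)^{\mathrm t}$, you by choosing the extension basic from the start, which is legitimate since $T\mathcal Q^\perp$ is the horizontal lift of $T\mathcal P^\perp$), and then read off the pieces from parts (1), (3) and (4) of Lemma~\ref{nablas}. There is no gap.
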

\begin{proof}
By part $(1)$ in Lemma \ref{nablas} we see that
$${\mathcal S}^{\mathcal Q}(V,Z)=(\Hor\nabla_VZ)^{\mathrm t}+T_V(Z).$$
From this expression we deduce that $(\Hor\nabla_VZ)^{\mathrm t}$
is tensorial in $V$ and $Z$, so we can assume that $Z$ is
basic, and using again part $(1)$ in Lemma \ref{nablas} we obtain $(\Hor\nabla_VZ)^{\mathrm t}=A_Z(V)^{\mathrm t}$,
which concludes the proof of \eqref{ver}. Equation \eqref{hor} follows
directly from $(3)$ and $(4)$ in Lemma \ref{nablas}.
\end{proof}
\end{section}
\begin{section}{Horizontal geodesics}\label{sec:horizontalgeo}
Our aim in this section is to reformulate some of the main results in \cite{One67}, with some minor
modifications that clarify the role of the derived vector field introduced in Definition~\ref{thm:defderivedvectorfield}.
\subsection{The derived vector field}
Given a vector field $E$ along a curve $\alpha$ in $M$, we will denote
$E_*$ its projection by $\df\pi$, which is a vector field along the curve $\pi\circ\alpha$.
Moreover, we will use the same notation for a vector in $\pi\circ \alpha$ and its
horizontal lift in $\alpha$. It is important to clarify,
as we will use the same notation for the covariant derivative in $M$ and $B$,
that $\Ddt E_*$, when identified with a vector field on $\alpha$, denotes the horizontal lift of
the covariant derivative in $B$ of $E_*$. Even if the results of
\cite{One67} are stated and proved only in the case of Riemannian submersions, most of them are still valid
in the semi-Riemannian context. We will shortly recall in this section a few basic facts from \cite{One67},
stated for semi-Riemannian submersions.

It will be useful to introduce the following notation. Given a smooth  curve $\alpha$ in $M$
and a smooth vector field $E$ along $\alpha$, write $E=H+V$, with $H=\Hor E$ and $V=\V E$.
\begin{teo}\label{covariant}
Let $\pi:M\rightarrow B$ be a semi-Riemannian submersion, and let $E=H+V$
be a vector field on a curve $\alpha$ in $M$. Then
\begin{align*}
\Hor(\Ddt E)&=\Ddt E_*+A_H(\V\dot\alpha)+A_{\Hor \dot\alpha}(V)+T_{\V \dot\alpha}(V),\\
\V(\Ddt E)&=A_{\Hor\dot\alpha}(H)+T_{\V\dot\alpha}(H)+\V(\Ddt V).
\end{align*}
\end{teo}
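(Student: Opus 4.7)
The plan is to exploit the linearity $\Ddt E = \Ddt H + \Ddt V$ together with the decomposition $\dot\alpha = \Hor\dot\alpha + \V\dot\alpha$, and then reduce the four resulting terms to expressions already described in Lemma~\ref{nablas}. Since all the O'Neill tensors $A$ and $T$ are tensorial in their arguments, and since the covariant derivative along $\alpha$ at a point $t_0$ agrees with $\nabla_{\dot\alpha(t_0)}\tilde F$ for any local extension $\tilde F$ of the vector field along $\alpha$, I can work locally by extending $H(t_0)$ to a basic horizontal field $\tilde H$ and $V(t_0)$ to a vertical field $\tilde V$ on a neighborhood of $\alpha(t_0)$ in $M$. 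Both extensions are available because basic horizontal and vertical vector fields can be prescribed arbitrarily pointwise in a local chart adapted to the submersion.

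For the term $\Ddt H$, I split $\nabla_{\dot\alpha}\tilde H = \nabla_{\Hor\dot\alpha}\tilde H + \nabla_{\V\dot\alpha}\tilde H$. Lemma~\ref{nablas}(3) applied to the first summand gives $\Hor\nabla_{\Hor\dot\alpha}\tilde H + A_{\Hor\dot\alpha}(\tilde H)$, and by Lemma~\ref{nablas}(4) the basic horizontal piece $\Hor\nabla_{\Hor\dot\alpha}\tilde H$ is precisely the horizontal lift of $\nabla^*_{\df\pi(\dot\alpha)}H_* = \Ddt E_*$ evaluated at the horizontal component $H$, i.e.\ it equals $\Ddt H_*$ in our identification. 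Lemma~\ref{nablas}(1), applied to the second summand and using that $\tilde H$ is basic, yields $\Hor\nabla_{\V\dot\alpha}\tilde H + T_{\V\dot\alpha}(\tilde H) = A_H(\V\dot\alpha) + T_{\V\dot\alpha}(H)$. Collecting contributions, the horizontal part of $\Ddt H$ is $\Ddt H_* + A_H(\V\dot\alpha)$ and its vertical part is $A_{\Hor\dot\alpha}(H) + T_{\V\dot\alpha}(H)$ (recalling that $A_XY$ with both horizontal is vertical, while $A_XV$ is horizontal and $T_VX$ is vertical for $X$ horizontal).

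For the term $\Ddt V$, Lemma~\ref{nablas}(2) applied to $\nabla_{\Hor\dot\alpha}\tilde V$ gives $A_{\Hor\dot\alpha}(V) + \V\nabla_{\Hor\dot\alpha}\tilde V$, and from the very definition \eqref{eq:deftensorT} of $T$ restricted to two vertical arguments, $\nabla_{\V\dot\alpha}\tilde V = T_{\V\dot\alpha}(V) + \V\nabla_{\V\dot\alpha}\tilde V$. The two $\V\nabla$-pieces add up to $\V(\Ddt V)$, the remaining pieces $A_{\Hor\dot\alpha}(V)$ and $T_{\V\dot\alpha}(V)$ are horizontal (the second because $T$ sends two vertical vectors to a horizontal one, being the second fundamental form of the fiber). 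Summing the horizontal and vertical parts of $\Ddt H$ and $\Ddt V$ separately yields the two stated formulas.

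The only subtle point is the justification that one may extend $H$, respectively $V$, to a basic horizontal, respectively vertical, field in a neighborhood of $\alpha(t_0)$ and that the Lemma~\ref{nablas} formulas then apply after the extension. This is a minor technicality since the output is either tensorial (the $A$ and $T$ terms) or has already been recognized as the correct intrinsic object along the curve (the $\Ddt H_*$ and $\V(\Ddt V)$ terms), so the result is independent of the chosen extension; this is the step that most deserves an explicit remark in the write-up.
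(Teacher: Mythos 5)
The paper itself offers no proof of this theorem---it simply cites \cite[Theorem 1]{One67}---so your attempt is measured against the standard argument. Your overall strategy (split $\Ddt E=\Ddt H+\Ddt V$ and $\dot\alpha=\Hor\dot\alpha+\V\dot\alpha$, then read off the four resulting terms from Lemma~\ref{nablas} and from the definitions \eqref{eq:deftensorT}--\eqref{eq:deftensorA}) is exactly the expected one, and every individual term identification you make is correct.

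The gap is in the extension step, and it is not uniformly ``minor''. For the pieces that are tensorial in $E$ (all the $A$- and $T$-terms, and also $\V(\Ddt H)$ and $\Hor(\Ddt V)$, which depend only on $H(t_0)$, resp.\ $V(t_0)$, because the derivative of a coefficient lands in the complementary subbundle) a pointwise extension of $H(t_0)$ and $V(t_0)$ does suffice, by the argument you sketch. But $\Hor(\Ddt H)$ is \emph{not} tensorial in $H$, and a basic field $\tilde H$ with $\tilde H(\alpha(t_0))=H(t_0)$ in general does not restrict to $H$ along $\alpha$; hence $\nabla_{\dot\alpha(t_0)}\tilde H\neq\Ddt H(t_0)$, and likewise the horizontal lift of $\nabla^*_{\df\pi(\dot\alpha(t_0))}\tilde H_*$ is \emph{not} $\Ddt E_*(t_0)$. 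Already in the flat submersion $\R^2\to\R$ with $H(t)=f(t)\partial_x$ along a horizontal line, the constant basic extension gives $0$ for both of these terms, while their true common value is $f'(t_0)\partial_x$: the two errors cancel, but your write-up asserts this cancellation (``has already been recognized as the correct intrinsic object'') rather than proving it. The standard repair is to write $E=\sum_i f_i\,(X_i\circ\alpha)+\sum_j g_j\,(V_j\circ\alpha)$ for a local basic frame $X_i$ and vertical frame $V_j$, check that both sides of each identity transform identically under multiplication of $E$ by a function of $t$ (the $f_i'$-terms match precisely because the horizontal lift of $(X_i)_*$ is $X_i$, and the $g_j'$-terms are absorbed into $\V(\Ddt V)$), and then verify the identities on restrictions of basic and of vertical fields, where your computation is literally valid.
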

\begin{proof}
See \cite[Theorem 1]{One67}.
\end{proof}
\begin{cor}
Let $\alpha$ be a curve in $M$ with $X=\Hor \dot\alpha$ and $U=\V\dot\alpha$. Then
\begin{align*}
\Hor(\Ddt\dot\alpha)&=\Ddt\dot\alpha_*+2A_X(U)+T_U(U),\\
\V(\Ddt\dot\alpha)&=T_U(X)+\V(\Ddt U),
\end{align*}
where $\Ddt\dot\alpha_*$ is the horizontal lift to $\alpha$ of the acceleration of $\pi\circ\alpha$ in $B$.
\end{cor}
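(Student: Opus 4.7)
The plan is to obtain this corollary as an immediate specialization of Theorem~\ref{covariant}, applied to the particular vector field $E=\dot\alpha$ along $\alpha$. With this choice the horizontal and vertical parts of $E$ are exactly $H=\Hor\dot\alpha=X$ and $V=\V\dot\alpha=U$, so substituting into the two identities of Theorem~\ref{covariant} produces the two displayed formulas after at most one algebraic simplification in each.

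Concretely, I would first substitute $E=\dot\alpha$ into
\[\Hor(\Ddt E)=\Ddt E_*+A_H(\V\dot\alpha)+A_{\Hor\dot\alpha}(V)+T_{\V\dot\alpha}(V).\]
The first term becomes $\Ddt\dot\alpha_*$ (the horizontal lift of the acceleration of $\pi\circ\alpha$, by the convention introduced at the beginning of Section~\ref{sec:horizontalgeo}), while the second and third terms are both equal to $A_X(U)$; they combine into $2A_X(U)$, and the fourth term is $T_U(U)$, giving the first identity.

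Next I would substitute into
\[\V(\Ddt E)=A_{\Hor\dot\alpha}(H)+T_{\V\dot\alpha}(H)+\V(\Ddt V).\]
This yields $A_X(X)+T_U(X)+\V(\Ddt U)$. The only nontrivial point to record is that $A_X(X)=0$: this is property~(2) of the fundamental tensors listed in Subsection~\ref{sub:fundtensors}, namely that $A$ is alternating on horizontal vector fields, so $A_X(X)=-A_X(X)$ and hence vanishes. This leaves exactly $T_U(X)+\V(\Ddt U)$, which is the claimed second identity.

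There is essentially no obstacle here; the content of the corollary is entirely contained in Theorem~\ref{covariant}, and the only observation beyond substitution is the skew-symmetry (alternating property) of $A$ on pairs of horizontal vectors. I would therefore write the proof as a single short paragraph: ``\emph{Take $E=\dot\alpha$ in Theorem~\ref{covariant}, so that $H=X$ and $V=U$. The two terms $A_H(\V\dot\alpha)$ and $A_{\Hor\dot\alpha}(V)$ coincide and add to $2A_X(U)$, while $A_{\Hor\dot\alpha}(H)=A_X(X)=0$ because $A$ is alternating on horizontal fields.}''
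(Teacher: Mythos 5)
Your proposal is correct and matches the paper's treatment: the paper states this corollary without proof as an immediate consequence of Theorem~\ref{covariant}, and your substitution $E=\dot\alpha$ (so $H=X$, $V=U$), together with the observation that $A_X(X)=0$ because $A$ is alternating on horizontal fields, is exactly the intended argument.
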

\begin{cor}
The  projection of a horizontal geodesic in $M$ is a geodesic in $B$.\qed
\end{cor}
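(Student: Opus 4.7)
The plan is to derive this as an immediate consequence of the previous corollary. Let $\alpha:I\to M$ be a horizontal geodesic, so that by definition $U=\V\dot\alpha\equiv 0$ (horizontality) and $\Ddt\dot\alpha\equiv 0$ (geodesic equation), which in particular forces $\Hor\bigl(\Ddt\dot\alpha\bigr)\equiv 0$.

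Next, I would apply the formula
\[\Hor\bigl(\Ddt\dot\alpha\bigr)=\Ddt\dot\alpha_*+2A_X(U)+T_U(U)\]
from the previous corollary. With $U\equiv 0$, the $A$-term and the $T$-term vanish identically (both $A$ and $T$ being tensorial in their arguments, with $U=0$), so the identity collapses to $\Ddt\dot\alpha_*\equiv 0$. Since $\dot\alpha_*=\df\pi(\dot\alpha)$ is, up to the canonical identification of horizontal vectors along $\alpha$ with tangent vectors along $\pi\circ\alpha$, exactly the velocity of $\pi\circ\alpha$, and since $\Ddt$ on the horizontal lift is by convention the horizontal lift of the Levi--Civita covariant derivative of $B$, this says that the acceleration of $\pi\circ\alpha$ in $B$ vanishes. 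Hence $\pi\circ\alpha$ is a geodesic of $B$.

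There is essentially no obstacle here beyond unpacking the notation: the only subtlety is confirming that $\Ddt\dot\alpha_*=0$ along $\alpha$, interpreted via horizontal lift, is genuinely equivalent to $\nabla^*_{\tfrac{\mathrm d}{\mathrm dt}(\pi\circ\alpha)}\tfrac{\mathrm d}{\mathrm dt}(\pi\circ\alpha)=0$ in $B$. This is precisely the convention fixed in the opening paragraph of Section~4, so no further work is required. One should also note that the vertical equation $\V(\Ddt\dot\alpha)=T_U(X)+\V(\Ddt U)$ is automatically satisfied under our hypotheses ($U\equiv 0$ forces $T_U(X)=0$ and $\V(\Ddt U)=0$), so it imposes no additional condition and is not needed for the proof.
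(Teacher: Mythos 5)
Your proof is correct and is exactly the argument the paper intends: the corollary is stated with a \qed precisely because, as you show, setting $U=\V\dot\alpha\equiv 0$ and $\Hor(\Ddt\dot\alpha)\equiv 0$ in the preceding corollary's formula immediately yields $\Ddt\dot\alpha_*\equiv 0$. Your remarks on the horizontal-lift convention and on the vertical equation being vacuous are accurate and match the conventions fixed at the start of Section~4.
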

\begin{teo}\label{geohor}
Let $\pi:M\rightarrow B$ be a semi-Riemannian submersion. If $\gamma$ is a geodesic of $M$
that is horizontal at some point, then it is always horizontal (hence $\pi\circ\gamma$
is a geodesic of $B$).
\end{teo}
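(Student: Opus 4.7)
The plan is to reduce the claim to the uniqueness of geodesics with prescribed initial data, by constructing a horizontal geodesic of $M$ that matches $\gamma$ at the distinguished point. Fix $t_0\in I$ with $\dot\gamma(t_0)\in\Hor T_{\gamma(t_0)}M$, let $\sigma$ be the maximal geodesic of $B$ with $\sigma(t_0)=\pi\big(\gamma(t_0)\big)$ and $\sigma'(t_0)=\df\pi\big(\dot\gamma(t_0)\big)$, and use Proposition~\ref{thm:exlift} to lift $\sigma$ horizontally to a curve $\tilde\gamma$ in $M$ with $\tilde\gamma(t_0)=\gamma(t_0)$, defined on a neighborhood $J_0$ of $t_0$.

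Next I would verify that $\tilde\gamma$ is itself a geodesic of $M$. Applying Theorem~\ref{covariant} to the vector field $E=\dot{\tilde\gamma}$ along $\alpha=\tilde\gamma$, so that $H=\dot{\tilde\gamma}$ and $V=0$, the horizontal component of the acceleration reduces to $\Hor(\Ddt\dot{\tilde\gamma})=\Ddt\sigma'=0$ since $\sigma$ is a geodesic of $B$, while the vertical component reduces to $\V(\Ddt\dot{\tilde\gamma})=A_{\dot{\tilde\gamma}}(\dot{\tilde\gamma})+T_{0}(\dot{\tilde\gamma})+\V\Ddt 0=0$, using the alternating property $A_X(X)=0$ and the vanishing of all terms containing the zero vertical part. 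Hence $\Ddt\dot{\tilde\gamma}=0$. The initial velocities $\dot\gamma(t_0)$ and $\dot{\tilde\gamma}(t_0)$ are both horizontal at $\gamma(t_0)$ and share the image $\sigma'(t_0)$ under $\df\pi$, so condition~(S2) forces them to coincide. By the uniqueness of geodesics, $\gamma\equiv\tilde\gamma$ on $J_0$, and in particular $\dot\gamma$ is horizontal on a neighborhood of $t_0$.

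A standard connectedness argument then closes the proof: the set $J^{*}=\{t\in I:\dot\gamma(t)\text{ is horizontal}\}$ is closed in $I$ by continuity of $\dot\gamma$ and closedness of the horizontal subbundle, and is open by the local argument above applied at an arbitrary $t_{1}\in J^{*}$; since $I$ is connected, $J^{*}=I$. The fact that $\pi\circ\gamma$ is then a geodesic of $B$ is immediate from the preceding corollary. I do not anticipate any serious obstacle; the only delicate step is checking that $\tilde\gamma$ is a geodesic, and the decomposition in Theorem~\ref{covariant} handles this almost for free, the key ingredients being the vanishing of $T$ and $A$ on their zero slots together with the skew-symmetry $A_X(X)=0$.
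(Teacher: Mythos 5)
Your proof is correct. The paper gives no argument of its own here---it simply cites \cite[Corollary~2]{One67}---and your lift-and-uniqueness argument (the horizontal lift of the projected geodesic is a geodesic by Theorem~\ref{covariant} together with $A_X(X)=0$ and the tensoriality of $T$, the initial velocities agree because $\df\pi$ is injective on horizontal subspaces by (S2), and then an open--closed argument on the connected domain) is precisely the standard proof behind that citation, carried out with the tools the paper has already assembled in Proposition~\ref{thm:exlift} and Theorem~\ref{covariant}.
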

\begin{proof}
See \cite[Corollary 2]{One67}.
\end{proof}

\begin{defin}\label{thm:defderivedvectorfield}
The \emph{derived} vector field of $E=H+V$, denoted by $D(E)$, is the smooth vector field
along $\gamma$ defined by:
\[D(E)=\V\left(\Ddt V\right)-T_V(\dot\gamma)+2A_{\dot\gamma}(H).\]
\end{defin}
\begin{lem}\label{lem:D=0}
Let $\gamma$ be a horizontal curve in $M$. Given a vector field $P$ on
$\pi\circ\gamma$ and a vector $z\in T_{\gamma(t_0)}M$, there exists a unique vector
field $E$ on $\gamma$ such that
\begin{enumerate}
\item $E_*=P$,
\item $D(E)=0$,
\item $E(t_0)=z$.
\end{enumerate}
\end{lem}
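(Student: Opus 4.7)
The plan is to exploit the fact that condition (1) pins down the horizontal part of $E$ completely, so that the only unknown left is the vertical part, which will be determined by condition (2) read as a first-order linear ODE with the initial datum provided by (3). First I write $E = H + V$ with $H = \Hor E$ and $V = \V E$. Under the identification of horizontal vectors with their $\df\pi$-projections, condition (1) $E_*=P$ forces $H$ to be the horizontal lift $\bar P$ of $P$ along $\gamma$; this is now entirely determined by the data. Since $H(t_0) = \bar P(t_0)$, compatibility of (1) and (3) is the implicit assumption $\df\pi(z) = P(t_0)$, and condition (3) then amounts to prescribing $V(t_0) = \V z$.

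With $H = \bar P$ fixed, condition (2) becomes the equation
\begin{equation*}
\V\bigl(\Ddt V\bigr) = T_V(\dot\gamma) - 2A_{\dot\gamma}(\bar P)
\end{equation*}
for the vertical field $V$. Because $\gamma$ is horizontal, the definitions \eqref{eq:deftensorT} and \eqref{eq:deftensorA} of $T$ and $A$ collapse to $T_V(\dot\gamma) = \V\nabla_V\dot\gamma$ and $A_{\dot\gamma}(\bar P) = \V\nabla_{\dot\gamma}\bar P$, so both terms on the right-hand side, together with the left-hand side, lie in the vertical subbundle along $\gamma$. The operator $V \mapsto \V(\Ddt V)$ satisfies the Leibniz rule and therefore defines a linear connection on this subbundle, while $V \mapsto T_V(\dot\gamma)$ is a smooth bundle endomorphism by the tensoriality of $T$.

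Finally, expanding $V$ in any smooth frame of vertical vector fields along $\gamma$, the displayed equation takes the form of a linear first-order ODE in the coefficients, with smooth inhomogeneous term coming from $-2A_{\dot\gamma}(\bar P)$ and prescribed initial value $\V z$ at $t_0$. Standard existence and uniqueness for linear ODE's yields a unique global solution $V$ on the whole parameter interval of $\gamma$, and setting $E := \bar P + V$ produces the desired vector field; uniqueness of $E$ follows from the uniqueness of $\bar P$ and of $V$. The only non-routine point is verifying that every term in the equation remains inside the vertical subbundle, which is immediate from the identities above; once that observation is recorded, no real obstacle remains.
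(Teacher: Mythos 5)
Your proof is correct and follows the same route as the paper, which simply defers to Lemma~1 of O'Neill's \emph{Submersions and geodesics}: there too the horizontal part is forced to be the lift of $P$ and $D(E)=0$ is read as a linear first-order ODE for the vertical part, solved with initial datum $\V z$. Your explicit check that every term of that ODE stays in the vertical subbundle, and your remark that (1) and (3) tacitly require $\df\pi(z)=P(t_0)$, are exactly the details the citation leaves to the reader.
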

\begin{proof}Analogous to Lemma 1 in \cite{One67}.
\end{proof}
\subsection{Infinitesimally horizontal vector fields}
An immediate calculation using \eqref{eq:deftensorT} and \eqref{eq:deftensorA} shows that
equality \eqref{eq:infhorizontality} is satisfied if and only if $D(E)=0$.
Thus, we have the following:
\begin{prop}\label{d=0}
Let $\gamma$ be a smooth horizontal curve in $M$ and let $E$ be a smooth vector field.
Then, $E$ is \emph{infinitesimally horizontal}, i.e., it is the variational vector field
along $\gamma$ corresponding to a smooth variation of $\gamma$ by horizontal curves, if and only if $D(E)=0$.
\end{prop}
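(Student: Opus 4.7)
My plan is to reduce both implications to identities and uniqueness results already established in the paper. The key observation, which handles the forward implication almost for free, is that along the horizontal curve $\gamma$ one has $\V\dot\gamma=0$ and $\Hor\dot\gamma=\dot\gamma$, so Theorem~\ref{covariant} specializes to $\V\Ddt E = A_{\dot\gamma}(H) + \V\Ddt V$, where $E=H+V$ with $H=\Hor E$, $V=\V E$. Using that $A$ is alternating on horizontal arguments ($A_H(\dot\gamma)=-A_{\dot\gamma}(H)$), the identity $\V\Ddt E = A_H(\dot\gamma)+T_V(\dot\gamma)$ from~\eqref{eq:infhorizontality} is equivalent to
\[
2A_{\dot\gamma}(H) + \V\Ddt V - T_V(\dot\gamma) = 0,
\]
which is precisely $D(E)=0$. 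Combined with Proposition~\ref{thm:varhoriz}(c), this already delivers the forward implication ``infinitesimally horizontal $\Rightarrow D(E)=0$''.

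For the converse, I plan to build a horizontal variation of $\gamma$ starting from the datum $E_*=\df\pi\circ E$ on $B$, and then invoke Lemma~\ref{lem:D=0} to certify that its variational field is exactly $E$. Set $\beta=\pi\circ\gamma$ and pick any smooth $\Psi:[a,b]\times\left]-\varepsilon,\varepsilon\right[\to B$ with $\Psi(\cdot,0)=\beta$ and $\tfrac{\partial}{\partial s}\big|_{s=0}\Psi=E_*$ (for instance via the exponential map of $B$). Since $\df\pi(E(a))=E_*(a)$, I can then choose a smooth curve $\eta:\left]-\varepsilon,\varepsilon\right[\to M$ with $\pi\circ\eta=\Psi(a,\cdot)$, $\eta(0)=\gamma(a)$ and $\dot\eta(0)=E(a)$; this last equality is the crucial normalization that will pin down the initial value of the variational field. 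Applying Proposition~\ref{thm:varhoriz} to the pair $(\Psi,\eta)$ yields a smooth $\Gamma:\mathcal A\to M$ with $\pi\circ\Gamma=\Psi$, $\Gamma(a,\cdot)=\eta$ and each $t\mapsto\Gamma(t,s)$ horizontal. Uniqueness of horizontal lifts (Proposition~\ref{thm:exlift}) forces $\Gamma(\cdot,0)=\gamma$, so $\mathcal A$ contains $[a,b]\times\{0\}$, and part (b) of Proposition~\ref{thm:varhoriz} then extends $\mathcal A$ to contain $[a,b]\times[-\delta,\delta]$ for some $\delta>0$.

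To finish, I would set $\widetilde E(t)=\tfrac{\partial}{\partial s}\big|_{s=0}\Gamma(t,s)$. Proposition~\ref{thm:varhoriz}(c), together with the already-established forward direction, guarantees $D(\widetilde E)=0$. From $\pi\circ\Gamma=\Psi$ I read $\widetilde E_*=E_*$, and from $\Gamma(a,\cdot)=\eta$ I read $\widetilde E(a)=\dot\eta(0)=E(a)$. Lemma~\ref{lem:D=0} now forces $\widetilde E=E$, so $\Gamma$ is a horizontal variation realizing $E$. The main obstacle is not any individual calculation---each is routine once the right setup is in place---but arranging the construction so that all three uniqueness data ($E_*$, the vanishing of $D$, and the initial value at $t=a$) align between $E$ and the variational field of $\Gamma$; that is exactly what compels the choice $\dot\eta(0)=E(a)$ and makes Lemma~\ref{lem:D=0} do the final identification.
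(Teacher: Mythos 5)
Your proposal is correct and follows essentially the same route as the paper's proof: the forward implication via Proposition~\ref{thm:varhoriz}(c) together with the equivalence of \eqref{eq:infhorizontality} and $D(E)=0$ (which you verify through the second identity of Theorem~\ref{covariant} and the alternating property of $A$, where the paper invokes a direct computation with the tensors $T$ and $A$), and the converse by lifting a variation of $\pi\circ\gamma$ with variational field $E_*$, normalizing the initial value, and concluding with the uniqueness in Lemma~\ref{lem:D=0}. Your write-up merely makes explicit the choice $\dot\eta(0)=E(a)$ that the paper leaves implicit.
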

\begin{proof}
The proof of Proposition~\ref{thm:varhoriz} and the second equation in Theorem~\ref{covariant}  show
that equation \eqref{eq:infhorizontality},
equivalent to $D(E)=0$,
is precisely the linearization of the horizontality condition for variations of $\gamma$.
Thus, if $s\mapsto\gamma_s$ is a smooth variation of $\gamma$ by horizontal curves, then
necessarily the variational vector field $E=\frac{\partial}{\partial s}\big\vert_{s=0}\gamma_s$
satisfies $D(E)=0$. Conversely, assume that $D(E)=0$, and consider the projection $P=E_*$.  We can choose
a variation of $\pi\circ\gamma$ in $B$ with variational vector field $P$, and by Proposition~\ref{thm:varhoriz}
we can lift such variation to a variation of $\gamma$ in $M$  by horizontal curves. Moreover,
we can prescribe the initial value of the corresponding variational field $\widetilde P$ to be
equal to the value of $E$. Thus, by the first part of the proof,  $\tilde P$ is an infinitesimally horizontal
 vector field that projects onto $P$,
and it coincides with $E$ at the initial instant, therefore, by Lemma \ref{lem:D=0}, $\widetilde P=E$ everywhere.
\end{proof}

We recall briefly the notion of Jacobi fields. Given a geodesic $\gamma$ in $M$, a field $E$ on $\gamma$ is Jacobi iff satisfies the Jacobi equation
$\tfrac{\mathrm D^2}{\mathrm dt^2} E=R(\dot\gamma,E)\dot\gamma$, where $R$ is the curvature tensor of $M$.
Analogously, given a geodesic $x$ in $B$, a field $F$ on $x$ is Jacobi iff satisfies
$\tfrac{\mathrm D^2}{\mathrm dt^2}F= R_*(\dot x,F)\dot x$,  where $R_*$ is the curvature tensor of $B$.
\begin{teo}
Let $E$ be a vector field on a horizontal geodesic $\gamma$ in $M$. Then
\begin{align*}
\Hor \big(\tfrac{\mathrm D^2}{\mathrm dt^2}E- R(\dot\gamma,E)\dot\gamma\big)&=\tfrac{\mathrm D^2}{\mathrm dt^2} E_*- R_*(\dot\gamma,E_*)\dot\gamma+2A_{\dot\gamma}(D),\\
\V\big(\tfrac{\mathrm D^2}{\mathrm dt^2}E- R(\dot\gamma,E)\dot\gamma\big)&=\V(\Ddt D)+T_D(\dot\gamma),
\end{align*}
where $D=D(E)$ is the derived vector field of $E$ and  $R$
and $ R_*$ are the curvature tensors of $M$ and $B$,  and $\tfrac{\mathrm D^2}{\mathrm dt^2} E_*- R_*(\dot\gamma,E_*)\dot\gamma$ denotes the horizontal lift to $\gamma$ of the vector field $\tfrac{\mathrm D^2}{\mathrm dt^2} E_*-R_*(\dot x,E_*)\dot x$, along the curve $x=\pi\circ\gamma$.
\end{teo}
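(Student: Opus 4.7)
The strategy is a direct computation: apply Theorem~\ref{covariant} twice to $E$ in order to expand $\tfrac{\mathrm D^2}{\mathrm dt^2} E$, and combine the result with O'Neill's curvature identities from \cite{One66} to decompose $R(\dot\gamma,E)\dot\gamma$ into its horizontal and vertical parts. First decompose $E=H+V$ with $H=\Hor E$ and $V=\V E$. Since $\gamma$ is horizontal, $\V\dot\gamma=0$, and Theorem~\ref{covariant} specialises to
\[\Hor\Ddt E=\Ddt E_*+A_{\dot\gamma}(V),\qquad \V\Ddt E=A_{\dot\gamma}(H)+\V\Ddt V.\]
Applying Theorem~\ref{covariant} a second time to $\Ddt E$, whose horizontal and vertical parts are given by the two expressions above, yields explicit formulas for $\Hor\Ddt^2 E$ and $\V\Ddt^2 E$ in terms of $\Ddt^2 E_*$, $\V\Ddt^2 V$, and second-order combinations of $A$ and $T$ applied along the geodesic.

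In parallel, split $R(\dot\gamma,E)\dot\gamma = R(\dot\gamma,H)\dot\gamma+R(\dot\gamma,V)\dot\gamma$ and expand each summand by means of the O'Neill curvature formulas, which express the horizontal and vertical components of these terms in terms of $R_*$ and of $A$, $T$ together with their covariant derivatives along $\dot\gamma$. Subtracting $R(\dot\gamma,E)\dot\gamma$ from $\Ddt^2 E$ and sorting the result by horizontal/vertical type, the derived vector field $D=D(E)=\V\Ddt V-T_V(\dot\gamma)+2A_{\dot\gamma}(H)$ should emerge as the organising quantity: in the horizontal part through the coupling $2A_{\dot\gamma}(D)$, and in the vertical part through $\V\Ddt D+T_D(\dot\gamma)$. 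Recognising these combinations relies on the Leibniz rule applied to terms such as $\Ddt\bigl(A_{\dot\gamma}(V)\bigr)$ together with the skew-symmetry properties of $A$ and $T$ and the tensorial identities of Lemma~\ref{nablas}.

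The principal difficulty is the bookkeeping. One must track how the curvature correction terms containing $(\nabla_{\dot\gamma}A)$ and $(\nabla_{\dot\gamma}T)$ interact with the commutator-type terms arising from the second application of Theorem~\ref{covariant}, and verify that after cancellation the horizontal component reduces to exactly $\Ddt^2 E_*-R_*(\dot\gamma,E_*)\dot\gamma+2A_{\dot\gamma}(D)$ while the vertical component reduces to $\V\Ddt D+T_D(\dot\gamma)$. A convenient sanity check is the infinitesimally horizontal case $D\equiv 0$ of Proposition~\ref{d=0}: both sides must then collapse to the statement that a Jacobi field along $\gamma$ projects to a Jacobi field along $\pi\circ\gamma$, which is the classical O'Neill result for horizontal-geodesic variations recovered by this formula.
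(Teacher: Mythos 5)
Your strategy coincides with the proof the paper actually gives, which is simply a citation of \cite[Theorem~2]{One67}: O'Neill's argument there is precisely this double application of Theorem~\ref{covariant} (specialised to $\V\dot\gamma=0$, as you correctly do in your first display) combined with the curvature equations of \cite{One66} expressing the components of $R(\dot\gamma,E)\dot\gamma$ through $R_*$, $A$, $T$ and their covariant derivatives. Be aware, though, that as written your text is a plan rather than a proof --- the cancellations producing $2A_{\dot\gamma}(D)$ and $\V(\Ddt D)+T_D(\dot\gamma)$ are asserted to ``emerge'' and the $D\equiv 0$ consistency check only tests a special case --- so the bookkeeping you defer is exactly the content of O'Neill's computation and would still need to be carried out.
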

\begin{proof}
See \cite[Theorem 2]{One67}.
\end{proof}
\begin{cor}\label{relacionajacobi}
A field $E$ on a horizontal geodesic in $M$ with derived vector field $D(E)=0$
is Jacobi  if and only if $P=E_*$ is a Jacobi field of $\pi\circ\gamma$ in $B$.\qed
\end{cor}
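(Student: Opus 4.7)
The plan is to derive the corollary directly from the preceding theorem by specializing to the case $D(E)=0$ and reading off both components of the Jacobi operator.

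First, I would apply the two identities from the theorem immediately above. Setting $D=D(E)=0$, the vertical part $\V\big(\tfrac{\mathrm D^2}{\mathrm dt^2}E-R(\dot\gamma,E)\dot\gamma\big)=\V(\Ddt D)+T_D(\dot\gamma)$ collapses to zero automatically, while the horizontal part reduces to
\[
\Hor\big(\tfrac{\mathrm D^2}{\mathrm dt^2}E-R(\dot\gamma,E)\dot\gamma\big)=\tfrac{\mathrm D^2}{\mathrm dt^2}E_*-R_*(\dot\gamma,E_*)\dot\gamma,
\]
since the term $2A_{\dot\gamma}(D)$ vanishes. Thus under the assumption $D(E)=0$, the vector field $\tfrac{\mathrm D^2}{\mathrm dt^2}E-R(\dot\gamma,E)\dot\gamma$ is a purely horizontal vector field along $\gamma$, and it coincides with the horizontal lift of the vector field $\tfrac{\mathrm D^2}{\mathrm dt^2}P-R_*(\dot x,P)\dot x$ along $x=\pi\circ\gamma$, where $P=E_*$.

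Next, I would invoke the fact that $\df\pi$ is an isomorphism between $\Hor T_pM$ and $T_{\pi(p)}B$ (condition (S2) in the definition of a semi-Riemannian submersion). Consequently, the horizontal vector field on $\gamma$ displayed above vanishes identically if and only if its projection to $B$ vanishes identically. This gives the biconditional: $\tfrac{\mathrm D^2}{\mathrm dt^2}E=R(\dot\gamma,E)\dot\gamma$ on $\gamma$ is equivalent to $\tfrac{\mathrm D^2}{\mathrm dt^2}P=R_*(\dot x,P)\dot x$ on $\pi\circ\gamma$, i.e., $E$ is Jacobi in $M$ if and only if $P=E_*$ is Jacobi in $B$.

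There is essentially no obstacle: the content is entirely packaged in the theorem that precedes the corollary, and the only subtlety is observing that the condition $D(E)=0$ simultaneously kills both the $A_{\dot\gamma}(D)$ term in the horizontal equation and the whole vertical equation, so that the Jacobi operator on $E$ is the horizontal lift of the Jacobi operator on $E_*$. A short remark that the identification of horizontal lifts with tangent vectors to $B$ (used implicitly throughout) renders the lifted equation and the equation on $B$ strictly equivalent completes the argument.
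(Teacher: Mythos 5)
Your argument is correct and is precisely the intended one: the paper states this corollary with a \qed because it follows immediately from the preceding theorem by setting $D(E)=0$, which kills the vertical component and the $2A_{\dot\gamma}(D)$ term, leaving the Jacobi operator on $E$ equal to the horizontal lift of the Jacobi operator on $E_*$, and the conclusion follows since $\df\pi$ is an isomorphism on horizontal subspaces.
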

\end{section}
\begin{section}{Maslov index of horizontal geodesics}
We will now relate the Maslov index of a horizontal geodesic in a semi-Rie\-mannian
submersion with the Maslov index of its projection on the base of the submersion.
We emphasize that, unlike the standard Riemannian case, establishing a correspondence
between Jacobi fields on the horizontal geodesic and on its projection is not sufficient
in order to prove equality of the Maslov indexes. Namely, the Maslov index does not
depend only on the dimension of the space of Jacobi fields that are zero in the origin
and in the conjugate instant.
\smallskip

Throughout this section, we will consider the following setup:
\begin{itemize}
\item $\pi: (M,g)\to (B,h)$ is a semi-Riemannian submersion;
\item $\gamma:[a,b]\to M$ is a horizontal geodesic;
\item $x:[a,b]\to B$ is the projected geodesic in $B$: $x=\pi\circ\gamma$;
\item $\mathcal P\subset B$ is a non-degenerate submanifold of $B$ with $x(a)\in\mathcal P$ and $\dot x(a)\in T_{x(a)}\mathcal P^\perp$;
\item  $\mathcal S^\mathcal P$ is the second fundamental form of $\mathcal P$ at the point $x(a)$ in the direction $\dot x(a)$;
\item $\mathcal Q=\pi^{-1}(\mathcal P)$;
\item  $\mathcal S^\mathcal Q$ is the second fundamental form of $\mathcal Q$ at the point $\gamma(a)$ in the direction $\dot\gamma(a)$.
\end{itemize}
\smallskip
\subsection{$\mathcal Q$-Jacobi fields}
Consider the initial (or final) orthogonal submanifold $\mathcal Q$ for the geodesic $\gamma$.
One has the notion of a \emph{$\mathcal{Q}$-Jacobi field}, that is, a Jacobi field $E$ along $\gamma$
such that $E(a)\in T_{\gamma(a)}\mathcal{Q}$ and
\begin{equation}\label{pjacobi}
\left(\Ddt E(a)\right)^{\mathrm t}={\mathcal S}^{\mathcal Q}(E(a),\dot\gamma(a)).
\end{equation}
The $\mathcal{Q}$-Jacobi fields are precisely the variational vector fields corresponding to
variations of $\gamma$ given by geodesics that start orthogonally to $\mathcal{Q}$  (see \cite[Chapter 10]{One83}).
When a non-null $\mathcal{Q}$-Jacobi field is zero at an instant $t_0>a$, we say that
$\gamma(t_0)$ is a \emph{$\mathcal Q$-focal point} or that $t_0$ is a {\em $\mathcal Q$-focal instant} of the geodesic $\gamma$.  The idea behind the name is that there exists a
continuum of geodesics departing orthogonally from the submanifold $\mathcal{Q}$ and
focusing at $\gamma(t_0)$, but this holds  only
up to first order infinitesimals.

We will use several families of Jacobi fields, and it will be useful to introduce the following notation.
Let $\J$ be the linear space of all Jacobi fields along $\gamma$ and $\J_*$ be the linear space of Jacobi fields along $x$, then we define
\[\J_*(\mathcal{P})=\big\{E\in \J_* :  \text{$E$ is a ${\mathcal{P}}$-Jacobi field}\big\}\]
and
\[\J_\delta({\mathcal{P}})=\big\{E\in\J:D(E)=0,\,E_*\in \J_*(\mathcal{P})\big\}.\]
In particular, $\J_\delta(x(a))$ are the Jacobi fields   $J$ along $\gamma$ that are zero in $\gamma(a)$ and $D(J)=0$,  and $\J_*(x(a))$, the Jacobi fields along $x$ that are zero in $x(a)$.
\begin{lem}\label{lem:equiv}
A vector field $E$ on $\gamma$, such that $E(a)\in T_{\gamma(a)}\mathcal Q$, satisfies
\begin{equation}\label{boundaryconditions}
\left(\Ddt E(a)\right)^{\mathrm t}-{\mathcal S}^{\mathcal Q}(E(a),\dot\gamma(a))=\left(\Ddt E_*(a)\right)^{\mathrm t}-{\mathcal S}^{\mathcal P}(H(a),\dot\gamma(a))+ D(E)(a),
\end{equation}
where $H=\Hor E$. Moreover, $E$
 is a
$\mathcal{Q}$-Jacobi field iff
$E\in \J_\delta(\mathcal{P})$.
\end{lem}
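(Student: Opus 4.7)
Decompose $E=H+V$ with $H=\Hor E$, $V=\V E$, and evaluate everything at $t=a$. Because $T_{\gamma(a)}\mathcal Q=\V T_{\gamma(a)}M\oplus\Hor(T\mathcal Q)$ with $\Hor(T\mathcal Q)$ the horizontal lift of $T_{x(a)}\mathcal P$, the hypothesis $E(a)\in T_{\gamma(a)}\mathcal Q$ is equivalent to $H(a)\in\Hor(T\mathcal Q)$, that is $E_*(a)\in T_{x(a)}\mathcal P$. Since $\gamma$ is horizontal ($\V\dot\gamma=0$, $\Hor\dot\gamma=\dot\gamma$), Theorem~\ref{covariant} produces
\[\Ddt E(a)=\bigl(\Ddt E_*(a)+A_{\dot\gamma(a)}(V(a))\bigr)+\bigl(A_{\dot\gamma(a)}(H(a))+\V(\Ddt V)(a)\bigr),\]
in which the first parenthesis is horizontal and the second vertical; applying Proposition~\ref{prop:secondfund} separately to $V(a)$ and $H(a)$ yields
\[\mathcal S^{\mathcal Q}(E(a),\dot\gamma(a))=T_{V(a)}(\dot\gamma(a))+A_{\dot\gamma(a)}(V(a))^{\mathrm t}+A_{H(a)}(\dot\gamma(a))+\mathcal S^{\mathcal P}(H(a),\dot\gamma(a)).\]

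Now I take the tangent-to-$\mathcal Q$ part of the first display and subtract the second. The horizontal summands $A_{\dot\gamma}(V)^{\mathrm t}$ cancel, and the alternating identity $A_H(\dot\gamma)=-A_{\dot\gamma}(H)$ from property~(2) of the fundamental tensors regroups the remaining vertical pieces as
\[\V(\Ddt V)(a)-T_{V(a)}(\dot\gamma(a))+2A_{\dot\gamma(a)}(H(a)),\]
which is exactly $D(E)(a)$ by Definition~\ref{thm:defderivedvectorfield}. The surviving horizontal remainder is $(\Ddt E_*(a))^{\mathrm t}-\mathcal S^{\mathcal P}(H(a),\dot\gamma(a))$, and \eqref{boundaryconditions} drops out.

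For the second assertion I exploit the canonical horizontal/vertical split of the right-hand side of \eqref{boundaryconditions}: each summand of $D(E)$ is vertical, as one checks directly from \eqref{eq:deftensorT}--\eqref{eq:deftensorA} at a horizontal point, whereas $(\Ddt E_*(a))^{\mathrm t}-\mathcal S^{\mathcal P}(H(a),\dot\gamma(a))$ is (the horizontal lift of) a vector in $T_{x(a)}\mathcal P$. The implication $E\in\J_\delta(\mathcal P)\Rightarrow E$ is $\mathcal Q$-Jacobi is then immediate: both pieces on the right of \eqref{boundaryconditions} vanish by hypothesis, giving \eqref{pjacobi}, while $E(a)\in T_{\gamma(a)}\mathcal Q$ follows from $E_*(a)\in T_{x(a)}\mathcal P$. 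For the converse I invoke the geometric characterization recalled after \eqref{pjacobi}: a $\mathcal Q$-Jacobi field $E$ arises from a variation of $\gamma$ by geodesics whose initial velocities lie in $T\mathcal Q^\perp=\Hor\cap\mathrm d\pi^{-1}(T\mathcal P^\perp)$. Since these initial velocities are horizontal, Theorem~\ref{geohor} forces the whole variation to remain through horizontal geodesics, so by Proposition~\ref{d=0} we get $D(E)\equiv 0$ along $\gamma$; projecting the variation by $\pi$ yields a variation of $x$ by geodesics starting orthogonally to $\mathcal P$ with variational field $E_*$, showing $E_*\in\J_*(\mathcal P)$.

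The only real difficulty lies in the bookkeeping of the first two paragraphs, namely keeping the horizontal and vertical parts separate at each step and matching the alternating property of $A$ with the precise definition of $D(E)$ so that no spurious term survives the subtraction.
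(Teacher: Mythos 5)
Your proof is correct and follows essentially the same route as the paper: it combines Proposition~\ref{prop:secondfund} and Theorem~\ref{covariant} with the alternating property of $A$ to get \eqref{boundaryconditions}, and for the equivalence it uses the variational characterization of $\mathcal Q$-Jacobi fields together with Theorem~\ref{geohor} and Proposition~\ref{d=0}. The only (immaterial) difference is in the last step of the converse, where you obtain $E_*\in\J_*(\mathcal P)$ by projecting the variation directly, whereas the paper deduces it from the rearranged identity \eqref{boundaryconditions} together with Corollary~\ref{relacionajacobi}.
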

\begin{proof}
 From \eqref{ver} and \eqref{hor}, we get
\begin{equation}\label{segundaforma}
{\mathcal S}^{\mathcal Q}(E(a),\dot\gamma(a))=T_{V}(\dot\gamma)(a)+ A_{\dot\gamma}(V)^{\mathrm t}(a)+A_H (\dot\gamma)(a)+{\mathcal S}^{\mathcal P}(H(a),\dot\gamma(a)),
\end{equation}
where $V=\V E$,  and from Theorem \ref{covariant} and Definition \ref{thm:defderivedvectorfield}, we deduce the equation
\begin{equation}\label{edea}
\Ddt E=\Ddt E_*+A_{\dot\gamma}(V)-A_{\dot\gamma}(H)+T_V(\dot\gamma)+D(E).
\end{equation}
Equation \eqref{boundaryconditions} follows from \eqref{segundaforma} and \eqref{edea} by taking into account that $A$ is alternating for horizontal vectors. For the second part, we observe that \eqref{boundaryconditions} implies  that
condition \eqref{pjacobi} for $E$ to be a ${\mathcal Q}$-Jacobi field is equivalent to
\begin{equation}\label{ultimate}\left(\Ddt E_*(a)\right)^{\mathrm t}={\mathcal S}^{\mathcal P}(H(a),\dot\gamma(a))
-D(E)(a).
\end{equation}
If $E$ is a ${\mathcal Q}$-Jacobi field, then it is the variational vector field of a variation of $\gamma$ by geodesics that are horizontal at least at the origin, but by Theorem \ref{geohor} they have to be horizontal wherever. Applying Proposition \ref{d=0} we obtain that $D(E)=0$, so that
 Eq. \eqref{ultimate} and Corollary \ref{relacionajacobi}  imply that $E_*$ is $\mathcal{P}$-Jacobi.  For the converse, observe that using    \eqref{boundaryconditions} and Corollary \ref{relacionajacobi} we deduce that every $E$ in $\J_\delta({\mathcal P})$ is a $\mathcal{Q}$-Jacobi field.
\end{proof}
\subsection{$\mathcal Q$-Maslov index}
The  $\mathcal{Q}$-Maslov index of a geodesic is an algebraic count of the
$\mathcal{Q}$-focal points obtained as follows. Consider a smooth trivialization \begin{equation*}
p(t):\R^n\stackrel\cong\longrightarrow T_{\gamma(t)}M
\end{equation*}
of the tangent bundle $TM$ along the geodesic $\gamma$. Define
\begin{equation}\label{LQ}
 L_{\mathcal Q}=\{(v,w)\in \R^n\times{\R^n}^*: (p(a) [w] )^{\mathrm t}={\mathcal S}^{\mathcal Q}(p(a)[v] ,\dot\gamma(a))\},
\end{equation}
and let $[a,b]\ni t\to\Phi(t)$ be the flow of the symplectic system \eqref{eq:symplsystemassociatedgamma}  considering the symplectic space $V=\R^n\oplus{\R^n}^*$ endowed with the symplectic form
$\omega\big((v,\alpha),(w,\beta)\big)=\beta(v)-\alpha(w)$.
Then, the $\mathcal Q$-Maslov index of $\gamma$ is the Maslov index of the Lagrangian path $t\to  \Phi(t)[L_{\mathcal Q}]$  with respect to the Lagrangian $L_0=\{0\}\times {\R^n}^*$. As in Section \ref{independence} it is possible to show that the
$\mathcal Q$-Maslov index does not depend on the trivialization $p$.

As a first step, we will show that there is a correspondence between the non-degenerate $\mathcal{Q}$-focal and
$\mathcal{P}$-focal points of $\gamma$ an $x$ and the contribution to the Maslov index.
\begin{lem}\label{lem:suma}
Consider the following subspaces of $T_{\gamma(t_0)}M$ and $T_{x(t_0)}B$ respectively,
$\J_\delta(\mathcal{P})[t_0]=\big\{J(t_0): J\in J_\delta(\mathcal{P})\big\}$ and
$\J_*(\mathcal{P})[t_0]=\big\{J(t_0): J\in {\mathcal J}_*(\mathcal{P})\big\}$
for every $t_0\in\left(a,b\right]$. Then
\begin{equation}\label{adition}
\J_\delta(\mathcal{P})[t_0]=\V T_{\gamma(t_0)}M+\J_*(\mathcal{P})[t_0].
\end{equation}
\end{lem}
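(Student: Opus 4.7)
The plan is to verify the two set-theoretic inclusions separately, using only Lemma \ref{lem:D=0} (which, given $P$ along $\pi\circ\gamma$ and $z\in T_{\gamma(t_0)}M$, produces a unique vector field $E$ along $\gamma$ with $D(E)=0$, $E_*=P$, $E(t_0)=z$) together with Corollary \ref{relacionajacobi} (which upgrades such an $E$ to a Jacobi field as soon as $E_*$ is Jacobi).

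For $\J_\delta(\mathcal P)[t_0]\subseteq \V T_{\gamma(t_0)}M+\J_*(\mathcal P)[t_0]$, I would take any $E\in \J_\delta(\mathcal P)$ and decompose $E(t_0)=\V E(t_0)+\Hor E(t_0)$ according to \eqref{eq:defcalVcalH}. The vertical summand lies in $\V T_{\gamma(t_0)}M$, and under the identification via $\mathrm d\pi$ between $\Hor T_{\gamma(t_0)}M$ and $T_{x(t_0)}B$, the horizontal summand coincides with $E_*(t_0)\in \J_*(\mathcal P)[t_0]$, since $E_*\in \J_*(\mathcal P)$ by the very definition of $\J_\delta(\mathcal P)$.

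For the reverse inclusion, given $v\in \V T_{\gamma(t_0)}M$ and $P\in \J_*(\mathcal P)$, I would let $z\in T_{\gamma(t_0)}M$ be the vector whose vertical part is $v$ and whose horizontal part is the horizontal lift of $P(t_0)$. Applying Lemma \ref{lem:D=0} to $P$ and $z$ produces a vector field $E$ along $\gamma$ with $E_*=P$, $D(E)=0$, $E(t_0)=z$. Since $P$ is a Jacobi field along $\pi\circ\gamma$ and $D(E)=0$, Corollary \ref{relacionajacobi} guarantees that $E$ is itself a Jacobi field along $\gamma$, so $E\in \J_\delta(\mathcal P)$; hence $z=E(t_0)$ exhibits the prescribed sum as an element of $\J_\delta(\mathcal P)[t_0]$.

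I do not expect any serious obstacle: the argument is essentially bookkeeping once the lifting machinery of Section \ref{sec:horizontalgeo} is in place. The only point worth watching carefully is the consistent use of the implicit identification $\mathrm d\pi:\Hor T_{\gamma(t_0)}M\to T_{x(t_0)}B$ that makes the right-hand side of \eqref{adition} a well-defined subspace of $T_{\gamma(t_0)}M$; with that convention, the description of the vector $z$ in the second inclusion is unambiguous.
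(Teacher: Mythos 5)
Your proposal is correct and follows essentially the same route as the paper: both inclusions are obtained exactly as in the paper's proof, using the vertical/horizontal decomposition of $J(t_0)$ for one direction and Lemma~\ref{lem:D=0} together with Corollary~\ref{relacionajacobi} to manufacture the required element of $\J_\delta(\mathcal P)$ for the other. Your explicit remark about the identification via $\mathrm d\pi$ is a harmless elaboration of what the paper leaves implicit.
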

\begin{proof}
 Fix $J_*\in\J_*(\mathcal{P})$, then by Lemma \ref{lem:D=0} and
Corollary \ref{relacionajacobi}, there exists a Jacobi field $ J_v$ in $\J_\delta(\mathcal{P})$
such that $J_v(t_0)=J_*(t_0)+v$ for every $v\in \mathcal{V}T_{\gamma(t_0)}M$. Conversely,
if $J\in\J_\delta(\mathcal{P})$, then $J_*\in \J_*(\mathcal{P})$ and $J(t_0)=J_*(t_0)+v$
for some $v\in \mathcal{V}T_{\gamma(t_0)}M$.
\end{proof}

\begin{prop}\label{contribution}
Let $\pi:M\rightarrow B$ be a semi-Riemannian submersion and $\gamma:[a,b]\rightarrow M$  a
horizontal geodesic. Then, an instant $t_0$ is a $\mathcal{Q}$-focal instant
of $\gamma$  if and only if it is a $\mathcal{P}$-focal instant
of the curve $x=\pi\circ\gamma$.
Furthermore, if $t_0$ is a non-degenerate focal instant of $\gamma$, then so it is of $x$
and the contribution to the Maslov index given by $\gamma(t_0)$ coincides with the one of
$x(t_0)$.
\end{prop}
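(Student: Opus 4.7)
The plan is to first construct a linear bijection between $\mathcal{Q}$-Jacobi fields vanishing at $t_0$ and $\mathcal{P}$-Jacobi fields vanishing at $t_0$, and then to show that this bijection intertwines the crossing forms of the two Lagrangian paths at the crossing $t_0$.

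\emph{Bijection between focal Jacobi fields.} By Lemma \ref{lem:equiv}, the $\mathcal{Q}$-Jacobi fields along $\gamma$ are exactly the elements of $\J_\delta(\mathcal{P})$. I consider the projection $\phi:\J_\delta(\mathcal{P})\to\J_*(\mathcal{P})$, $E\mapsto E_*$, and its restriction
\[
\phi\vert_{K_\gamma}:K_\gamma:=\bigl\{E\in\J_\delta(\mathcal{P}):E(t_0)=0\bigr\}\longrightarrow K_x:=\bigl\{F\in\J_*(\mathcal{P}):F(t_0)=0\bigr\}.
\]
Injectivity is immediate from the uniqueness in Lemma \ref{lem:D=0} (take $P\equiv0$, $z=0$ at $t_0$). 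For surjectivity, given $F\in K_x$, Lemma \ref{lem:D=0} applied at the instant $t_0$ produces $E$ along $\gamma$ with $E_*=F$, $D(E)=0$, and $E(t_0)=0$; Corollary \ref{relacionajacobi} makes $E$ Jacobi, and its initial value $E(a)$ lies in $T_{\gamma(a)}\mathcal{Q}$ since $E_*(a)=F(a)\in T_{x(a)}\mathcal{P}$ and every vertical vector is automatically tangent to $\mathcal{Q}$. By Lemma \ref{lem:equiv}, $E\in\J_\delta(\mathcal{P})$ and hence $E\in K_\gamma$. Since an instant is $\mathcal{Q}$-focal (resp.\ $\mathcal{P}$-focal) iff $K_\gamma\neq0$ (resp.\ $K_x\neq0$), this settles the equivalence of focal instants and the equality of their multiplicities.

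\emph{Equality of crossing forms.} At a non-degenerate focal instant the Maslov contribution equals one-half of the signature of the crossing form on the corresponding kernel. A standard expansion of the chart $\varphi_{L_0,L_1}$ applied to the symplectic system \eqref{eq:symplsystemassociatedgamma}, in a parallel trivialization (legitimate by Proposition \ref{thm:changeoftrivialization}), identifies the crossing form on $K_\gamma$ with the bilinear form
\[
(J_1,J_2)\longmapsto g_{\gamma(t_0)}\bigl(\Ddt J_1(t_0),\Ddt J_2(t_0)\bigr),
\]
and the analogous form for $K_x$ with $h_{x(t_0)}\bigl(\Ddt F_1(t_0),\Ddt F_2(t_0)\bigr)$. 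The core computation is that, for every $J\in K_\gamma$, the covariant derivative $\Ddt J(t_0)$ is purely horizontal and coincides with the horizontal lift of $\Ddt J_*(t_0)$. Indeed, writing $J=H+V$ with $H(t_0)=V(t_0)=0$ and applying Theorem \ref{covariant} along the horizontal geodesic $\gamma$ (for which $\V\dot\gamma=0$),
\[
\Hor(\Ddt J)(t_0)=\Ddt J_*(t_0),\qquad \V(\Ddt J)(t_0)=\V(\Ddt V)(t_0),
\]
and the equation $D(J)=\V(\Ddt V)-T_V(\dot\gamma)+2A_{\dot\gamma}(H)=0$ evaluated at $t_0$, where the last two summands vanish by tensoriality in $V$ and $H$, forces $\V(\Ddt V)(t_0)=0$. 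Since $\mathrm d\pi$ restricted to horizontal vectors is an isometry, $g\bigl(\Ddt J,\Ddt J\bigr)(t_0)=h\bigl(\Ddt J_*,\Ddt J_*\bigr)(t_0)$, so $\phi$ intertwines the two crossing forms. Their signatures therefore agree, the non-degeneracy of one is equivalent to that of the other, and the Maslov contributions at $\gamma(t_0)$ and $x(t_0)$ coincide.

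The main obstacle, and the only non-formal point, is the explicit identification of the crossing form of the symplectic system \eqref{eq:symplsystemassociatedgamma} with the bilinear form $(J_1,J_2)\mapsto g\bigl(\Ddt J_1(t_0),\Ddt J_2(t_0)\bigr)$ on the kernel; this is standard but somewhat delicate, and is carried out by expanding the Lagrangian chart $\varphi_{L_0,L_1}$ near the crossing and using the explicit formula for $\omega$ on $\R^n\oplus{\R^n}^*$.
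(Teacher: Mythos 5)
Your proof is correct, but it runs ``dual'' to the paper's. The paper works with the \emph{image} of the evaluation map: by Lemma~\ref{lem:suma} it writes $\J_\delta(\mathcal{P})[t_0]=\V T_{\gamma(t_0)}M+\J_*(\mathcal{P})[t_0]$, so that $\J_\delta(\mathcal{P})[t_0]^\perp$ is the horizontal lift of $\J_*(\mathcal{P})[t_0]^\perp$, and then invokes the fact (from \cite{MerPicTau02}) that the contribution of a non-degenerate focal instant is the signature of the metric restricted to that orthogonal complement. You instead work with the \emph{kernel} of the evaluation map, building the bijection $K_\gamma\to K_x$ from Lemma~\ref{lem:D=0} and Corollary~\ref{relacionajacobi}, and your key computation --- that for $J\in K_\gamma$ the vector $\Ddt J(t_0)$ is horizontal and is the lift of $\Ddt J_*(t_0)$, because the tensorial terms $A_{\dot\gamma}(V)$, $A_{\dot\gamma}(H)$, $T_V(\dot\gamma)$ all vanish where $J$ does --- is exactly the kernel-side counterpart of Lemma~\ref{lem:suma}: the standard isomorphism $J\mapsto\Ddt J(t_0)$ carries $K_\gamma$ onto $\J_\delta(\mathcal{P})[t_0]^\perp$ and transports your crossing form onto the paper's restricted metric. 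Both arguments therefore lean on the same external input (the identification of the local Maslov contribution at a non-degenerate crossing), which you state in the crossing-form normalization and the paper states in the restricted-metric normalization; deferring that identification is no worse than what the paper does. Your version has the advantage of directly exhibiting equality of multiplicities and of making the geometric mechanism (horizontality of $\Ddt J(t_0)$) explicit; the paper's is slightly shorter because Lemma~\ref{lem:suma} is already on hand. Two small points: your phrase ``one-half of the signature of the crossing form'' is off by a factor of $2$ against the paper's normalization, where an interior non-degenerate crossing contributes the \emph{full} signature (compare property~(c) of $\mu_{L_0}$: the signature of $\varphi_{L_0,L_1}(\ell(t))$ jumps by twice the signature of the crossing form); this is harmless here since the same constant appears on both sides. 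Also, membership of your lifted field $E$ in $\J_\delta(\mathcal{P})$ already follows from $D(E)=0$ and $E_*\in\J_*(\mathcal{P})$ by definition, so the remark about $E(a)\in T_{\gamma(a)}\mathcal{Q}$ is not needed (though it is true).
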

\begin{proof}
We recall that  $t_0$ is a $\mathcal P$-focal instant of $x$ when $\J_*(\mathcal{P})[t_0]^\bot$ is
not equal to $\{0\}$ and that in such a case, its contribution to the Maslov index coincides with the signature of $h$
restricted to the subspace $\J_*(\mathcal{P})[t_0]^\bot$  (see for instance \cite{MerPicTau02}).
Moreover, by Lemma \ref{lem:equiv}, the $\mathcal Q$-focal points of $\gamma$ are just the instants where $\J_\delta(\mathcal{P})[t_0]^\bot$ is non trivial, and its contribution, the signature
of $g$ restricted to such a space.
Lemma \ref{lem:suma} implies that $\J_*(\mathcal{P})[t_0]^\bot$ is isometric to $\J_\delta(\mathcal{P})[t_0]^\bot$, so that the thesis of the proposition follows.
\end{proof}

We proceed now to prove the equality between the Maslov indexes in the general case.
\begin{teo}\label{thm:eqMaslovindices}
Let $\pi:(M,g)\rightarrow (B,h)$ be a semi-Riemannian submersion,  $\gamma:[a,b]\rightarrow M$ a
horizontal geodesic and $\mathcal{P}$ a submanifold of $B$ through $x (a)$ orthogonal to $x$. Then, the $\mathcal{Q}$-Maslov index of
$\gamma$  coincides with the ${\mathcal P}$-Maslov index
of the geodesic $x$.
\end{teo}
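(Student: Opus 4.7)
The plan is to use the additivity of the Maslov index under direct sums of symplectic spaces, by choosing a trivialization of $\gamma^*TM$ compatible with the horizontal--vertical decomposition. Set $k=n-m$. I would first fix a $C^2$ trivialization $q(t):\R^m\to T_{x(t)}B$ and let $p_\Hor(t):\R^m\to\Hor T_{\gamma(t)}M$ be its horizontal lift via $(\df\pi)^{-1}$. I would then complete $p_\Hor$ with an arbitrary $C^2$ trivialization $p_\V(t):\R^k\to\V T_{\gamma(t)}M$, obtaining $p(t)=p_\Hor(t)\oplus p_\V(t)$. By Proposition~\ref{thm:changeoftrivialization}, the $\mathcal Q$-Maslov index of $\gamma$ may be computed using this trivialization, and analogously for the $\mathcal P$-Maslov index of $x$ via $q(t)$.

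Under this trivialization, the ambient symplectic space decomposes as $\R^n\oplus{\R^n}^*=(\R^m\oplus{\R^m}^*)\oplus(\R^k\oplus{\R^k}^*)$. The next step is to show that the initial Lagrangian $L_\mathcal Q$ of \eqref{LQ} decomposes as $L_\mathcal Q=L_\mathcal P\oplus L_\V^0$, where $L_\mathcal P$ is the initial Lagrangian associated to $(x,\mathcal P,q)$ and $L_\V^0$ is a Lagrangian of the vertical symplectic summand. This uses formula \eqref{segundaforma} from the proof of Lemma~\ref{lem:equiv}: the condition $p(a)[v]\in T_{\gamma(a)}\mathcal Q$ splits into ``$v_\V$ free'' and ``$v_\Hor$ projects to $T_{x(a)}\mathcal P$'', while the momentum condition $(p(a)[w])^{\mathrm t}=\mathcal S^\mathcal Q(p(a)[v],\dot\gamma(a))$ splits along the $\Hor/\V$ decomposition by Proposition~\ref{prop:secondfund}, with the horizontal part reproducing exactly the $\mathcal P$-boundary condition for $x$.

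The key step is then to propagate this splitting: one must show that the Lagrangian curve $\ell(t)=\Phi(t)[L_\mathcal Q]$ continues to decompose as $\ell_\Hor(t)\oplus\ell_\V(t)$, where $\ell_\Hor(t)$ coincides with the Lagrangian curve associated to the $\mathcal P$-Maslov index of $x$ and $\ell_\V(t)$ is a Lagrangian curve in the vertical symplectic space that is \emph{constant}. The decomposition on the level of Jacobi fields is exactly the content of Lemma~\ref{lem:suma}, combined with Lemma~\ref{lem:equiv} and Corollary~\ref{relacionajacobi}: $\mathcal Q$-Jacobi fields are parametrized as $E_*+$ (vertical datum), where $E_*$ runs over $\mathcal P$-Jacobi fields on $x$ and the vertical datum is arbitrary with prescribed derived vector field $D(E)=0$. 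Translating this to the symplectic picture, the horizontal component of $\ell(t)$ is precisely the image under $q(t)^{-1}$ of the $\mathcal P$-Jacobi space at $x(t)$, while the vertical component always equals ``all positions $v_\V$, momenta determined by a fixed linear relation'', hence a fixed Lagrangian of $\R^k\oplus{\R^k}^*$.

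Once the decomposition $\ell(t)=\ell_\Hor(t)\oplus\ell_\V(t)$ is established, additivity of the Maslov index under symplectic direct sums (recalled in Section~\ref{PreMaslov}) gives
\[
\mu_{L_0}(\ell)=\mu_{L_0^\Hor}(\ell_\Hor)+\mu_{L_0^\V}(\ell_\V),
\]
with $L_0=L_0^\Hor\oplus L_0^\V$. The second summand vanishes since $\ell_\V$ is constant, and the first is the $\mathcal P$-Maslov index of $x$. The main obstacle is proving the invariance of the decomposition under the flow of the symplectic system: the connection form $\varpi$ and curvature $\widetilde R$ associated to $p(t)$ couple the horizontal and vertical factors through the O'Neill tensors $A$ and $T$, so the symplectic system itself does \emph{not} become block diagonal. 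One must instead invoke a symplectic gauge transformation of the form \eqref{eq:phi0}--\eqref{eq:ABC}, with $Z$ diagonal and a suitably chosen $W$ that absorbs these cross terms, exploiting again the derived vector field identities of Theorem~\ref{covariant} to verify that after this gauge change the Lagrangian curve becomes a genuine direct sum, while the gauge transformation preserves the Lagrangian $L_0$ and therefore leaves the Maslov index unchanged by Lemma~\ref{symplec}.
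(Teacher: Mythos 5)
Your proposal is correct and follows essentially the same route as the paper's proof: an adapted horizontal/vertical trivialization, the decomposition of the $\mathcal Q$-Jacobi data via Lemmas~\ref{lem:equiv} and~\ref{lem:suma}, a time-dependent momentum-shear symplectomorphism preserving $L_0$ (the paper's $\varphi_t$, which is exactly your gauge transformation \eqref{eq:phi0} with $Z=\mathrm{Id}$ and $W_t$ read off from the cross terms $T_{V}(\dot\gamma)+A_{\dot\gamma}(V)-A_{\dot\gamma}(H)$ in \eqref{edea}), followed by Lemma~\ref{symplec} and additivity under symplectic direct sums with a constant (hence Maslov-trivial) vertical factor. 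The only difference is that you leave $W$ implicit where the paper writes $\varphi_t$ explicitly and checks its symmetry from the algebraic properties of the O'Neill tensors.
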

\begin{proof}
 Consider an orthonormal frame
 \[p(t)=\{E^1(t),\dots,E^m(t), F^1(t),\dots,F^{n-m}(t)\}\] along $\gamma$ such that
 $E^1(t),\dots,E^m(t)$ are horizontal   and  $F^1(t),\dots,F^{n-m}(t)$ are vertical for
 every $t\in[a,b]$. Clearly, $p_*(t)=\{E^1_*(t),\dots,E^m_*(t)\}$ is an orthonormal
 frame along $x$.  Define $\mathcal{V}=\{0\}\times\R^{n-m}\subset \R^n$,
$\mathcal{H}=\R^m\times\{0\}\subset \R^n$. We recall that
$J$ is $\mathcal{Q}$-Jacobi iff $D(J)=0$ and $J_*\in\J_*(\mathcal{P})$ (see Lemma \ref{lem:equiv}).
Now define the Lagrangian path
\begin{align*}
\ell(t)=\big\{\big(p(t)^{-1}(J(t)), p(t)^*g_{\gamma(t)}[\Ddt J(t)]\big)\in \R^n\times {\R^n}^*:J\in\J_\delta(\mathcal{P})\big\}.
\end{align*}
for $t\in[a,b]$.  The $\mathcal Q$-Maslov index of $\gamma$ is equal to $\mu_{L_0}(\ell)$ (see \eqref{covdev}, \eqref{eq:JacobiRn}, \eqref{alfa}, \eqref{LQ} and Lemma \ref{lem:equiv}).   In the following, we will make an abuse of notation omitting $p(t)$ and $p^*(t)$   to avoid cluster.  Using that $D(J)=0$, \eqref{edea} and Lemma \ref{lem:suma} we deduce that
\begin{multline*}
 \ell(t)=\big\{(J_*(t)+V,g_{\gamma(t)}\left[\Ddt J_*(t)-A_{\dot\gamma}(J_*(t))\right.\\\left.+A_{\dot\gamma}(V)+T_V(\dot\gamma) \right])\in \R^n\times {\R^n}^*:J_*\in\J_*(\mathcal{P});V\in\mathcal{V}\big\}.
\end{multline*}
On the other hand,  let us define
\[\ell_*(t)=\big\{(J_*(t),h_{x(t)}\left[\Ddt J_*(t)\right])\in{\R^m}\times{\R^m}^*:J_*\in\J_*(\mathcal{P})\big\},\]
 (here we also omit $p_*(t)$ and $p_*(t)^*$).  The $\mathcal{P}$-Maslov index of $x$ is equal to the Maslov index
of the Lagrangian path $t\to \ell_*(t)$  relatively to the Lagrangian
$\{0\}\times{\R^m}^*$. Now define the map $\varphi_t:(\R^n\times{\R^n}^*,\omega)\rightarrow (\R^n\times{\R^n}^*,\omega)$ by
\begin{multline*}
\varphi_t(H_1+V_1,H_2+V_2)=(H_1+V_1,H_2+V_2\\+g_{\gamma(t)}\left[T_{V_1}(\dot\gamma(t))+A_{\dot\gamma(t)}(V_1))-A_{\dot\gamma (t)}(H_1)\right]),
\end{multline*}
where $H_1\in\mathcal{H}$, $H_2\in\mathcal{H}^*$, $V_1\in\mathcal{V}$ and $V_2\in\mathcal{V}^*$. It is easy to prove,  using
the symmetry properties of $T$ and $A$  (see Section \ref{lifts}), that $\varphi_t$ is a
symplectomorphism for every $t\in[a,b]$, and that it is continuous in $t$.
Moreover, if $\tilde{\ell}(t)=\ell_*(t)+\mathcal{V}\times\{0\}$, then
$\varphi_t(\tilde{\ell}(t))=\ell(t)$ and $\varphi_t(L_0)=L_0$, so that applying Lemma~\ref{symplec}
we obtain that  $\mu_{L_0}(\ell)=\mu_{L_0}(\tilde{\ell})$.
As $(\R^n\times{\R^n}^*,\omega)=(\mathcal{H}\times\mathcal{H}^*,\omega_1)\oplus
(\mathcal{V}\times\mathcal{V}^*,\omega_2)$, where $\omega_1$ and $\omega_2$ are the restrictions
of $\omega$, by the additivity property of the Maslov index, $\mu_{L_0}(\tilde{\ell})$ is equal to
the Maslov  index of the path $t\to\tilde{\ell}(t)\cap (\mathcal{H}\times\mathcal{H}^*)=\ell_*(t)$ relatively to
the Lagrangian $L_0\cap(\mathcal{H}\times\mathcal{H}^*)=\{0\}\times\mathcal{H}^*$,
which equals the $\mathcal P$-Maslov index of $x$, and the Maslov index of
\[t\longrightarrow\tilde{\ell}(t)\cap( \mathcal{V}\times\mathcal{V}^*)=\mathcal{V}\times\{0\}\] relatively
to $L_0\cap( \mathcal{V}\times\mathcal{V}^*)=\{0\}\times\mathcal{V}^*$, which is zero. This concludes the proof.
\end{proof}

\begin{subsection}{Index form}\label{sub:indexform} Let us now relate the index form of the horizontal geodesic with the index form of the projected geodesic in the base manifold.
We recall that the index form $I_{\{\gamma,\mathcal{Q}\}}$ is the symmetric bilinear
form on the space of vector fields along the geodesic
obtained as the second variation of the energy functional restricted
to curves departing from $\mathcal{Q}$ and arriving to a fixed point $\gamma(b)\in M$.
It is defined for vector fields along $\gamma$ of Sobolev class $H^1$ that are tangent
to $\mathcal{Q}$ in $a$ and zero in $b$ and it is given by
\begin{align*}
I_{\{\gamma,\mathcal{Q}\}}(E,F)=\int_a^b \left[g(\Ddt E,\Ddt F)+g(R(\dot\gamma,E)\dot\gamma,F)\right]\df
t- g({\mathcal S}^{\mathcal Q}(E(a),\dot\gamma(a)),F(a)).
\end{align*}
Moreover, when the vector fields are of class $C^2$, one can use partial integration
to obtain the following expression for the index form:
\begin{align*}
I_{\{\gamma,\mathcal{Q}\}}(E,F)=\int_a^b \left[g(-\tfrac{\mathrm D^2}{\mathrm dt^2}E\right.&\left.
+R(\dot\gamma,E)\dot\gamma,F)\right]\df t\\&
+g(\Ddt E(a),F(a))- g({\mathcal S}^{\mathcal Q}(E(a),\dot\gamma(a)),F(a)).
\end{align*}
\begin{teo}
Let $\gamma:[a,b]\to M$ be a horizontal geodesic and $\mathcal{P}$ a non-degenerate submanifold through $x(a)$ orthogonal to $x$.  Then
\[I_{\{\gamma,{\mathcal Q})\}}(E,F)=I_{\{x,\mathcal{P}\}}(E_*,F_*)+\int^b_ag(D(E),D(F))\df t
\]
\end{teo}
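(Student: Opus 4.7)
The plan is to work directly with the first (unintegrated) form of the index form
\[
I_{\{\gamma,\mathcal Q\}}(E,F)=\int_a^b\!\bigl[g(\Ddt E,\Ddt F)+g(R(\dot\gamma,E)\dot\gamma,F)\bigr]\df t-g\bigl(\mathcal S^{\mathcal Q}(E(a),\dot\gamma(a)),F(a)\bigr),
\]
and to decompose each summand along the horizontal/vertical splitting of $TM$ using the structural results established earlier in Sections~\ref{lifts} and \ref{sec:horizontalgeo}.

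First, I would combine Theorem~\ref{covariant} with the identity
\[
\V(\Ddt V_E)=D(E)+T_{V_E}(\dot\gamma)-2A_{\dot\gamma}(H_E),
\]
which follows at once from Definition~\ref{thm:defderivedvectorfield}, to write, along the horizontal geodesic~$\gamma$,
\[
\Hor(\Ddt E)=\Ddt E_*+A_{\dot\gamma}(V_E),\qquad \V(\Ddt E)=D(E)+T_{V_E}(\dot\gamma)-A_{\dot\gamma}(H_E),
\]
and analogously for $F$. Expanding $g(\Ddt E,\Ddt F)$ by the orthogonality of $\Hor$ and $\V$ produces the term $h(\Ddt E_*,\Ddt F_*)$ appearing in $I_{\{x,\mathcal P\}}$, the term $g(D(E),D(F))$ that we want to isolate, plus a collection of cross terms involving the fundamental tensors $A$ and $T$.

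Next, the curvature term $g(R(\dot\gamma,E)\dot\gamma,F)$ is handled with the horizontal/vertical decomposition of $\tfrac{\mathrm D^2}{\mathrm dt^2}E-R(\dot\gamma,E)\dot\gamma$ proved at the end of Section~\ref{sec:horizontalgeo}, rewritten in terms of the corresponding quantity for $E_*$ on $x$ together with derivatives of $D(E)$ and further $A$-$T$ terms. Integration by parts (using $F(b)=0$) converts the derivatives of $D(E)$ into inner products with $\V(\Ddt V_F)$, which is then again expressed via $D(F)$ through the same identity $\V(\Ddt V_F)=D(F)+T_{V_F}(\dot\gamma)-2A_{\dot\gamma}(H_F)$. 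The interior piece of this step produces precisely $\int_a^b g(D(E),D(F))\df t$ together with additional $A$-$T$ cross terms and endpoint contributions at $t=a$.

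The final input is Proposition~\ref{prop:secondfund}, which expresses the boundary term $g(\mathcal S^{\mathcal Q}(E(a),\dot\gamma(a)),F(a))$ as $h(\mathcal S^{\mathcal P}(E_*(a),\dot x(a)),F_*(a))$ plus $A$-$T$ correction terms at $t=a$. The main obstacle is the careful bookkeeping needed to show that \emph{all} the $A$-$T$ cross terms, both in the interior (from the expansions of $g(\Ddt E,\Ddt F)$ and $g(R(\dot\gamma,E)\dot\gamma,F)$ and from the integration-by-parts boundary contributions) and at the initial endpoint (from $\Ddt E(a)$ and $\mathcal S^{\mathcal Q}(E(a),\dot\gamma(a))$), cancel pairwise. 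These cancellations rely crucially on the three algebraic symmetries listed in Subsection~\ref{sub:fundtensors}: the skew-symmetry of $A_X$ and $T_V$ as endomorphisms of $T_pM$, the alternating property $A_X(Y)=-A_Y(X)$ for horizontal arguments, and the symmetry $T_V(W)=T_W(V)$ for vertical arguments. After these cancellations, what remains is exactly $I_{\{x,\mathcal P\}}(E_*,F_*)+\int_a^b g(D(E),D(F))\,\df t$, proving the theorem.
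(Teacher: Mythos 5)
Your ingredients are all correct and your treatment of the boundary term via Proposition~\ref{prop:secondfund} is essentially what the paper does: it packages exactly that computation into equation \eqref{boundaryconditions} of Lemma~\ref{lem:equiv}, pairs it with $F(a)$, and then defers the interior computation to O'Neill's Theorem~3. But the architecture of your interior computation, as written, does not add up: you produce the term $g(D(E),D(F))$ \emph{twice}. It appears once when you expand $g(\V\Ddt E,\V\Ddt F)$ using $\V(\Ddt E)=D(E)+T_{V_E}(\dot\gamma)-A_{\dot\gamma}(H_E)$, and a second time when you integrate by parts the $\V(\Ddt D(E))$ term of the decomposition of $\tfrac{\mathrm D^2}{\mathrm dt^2}E-R(\dot\gamma,E)\dot\gamma$ against $\V(\Ddt V_F)=D(F)+T_{V_F}(\dot\gamma)-2A_{\dot\gamma}(H_F)$. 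In a correct proof it appears exactly once. The two devices you invoke are not compatible in the way you combine them: to use the decomposition of $\tfrac{\mathrm D^2}{\mathrm dt^2}E-R(\dot\gamma,E)\dot\gamma$ for the curvature term alone you must write $R(\dot\gamma,E)\dot\gamma=\tfrac{\mathrm D^2}{\mathrm dt^2}E-\bigl(\tfrac{\mathrm D^2}{\mathrm dt^2}E-R(\dot\gamma,E)\dot\gamma\bigr)$ and integrate $g(\tfrac{\mathrm D^2}{\mathrm dt^2}E,F)$ by parts, which cancels the whole of $\int_a^b g(\Ddt E,\Ddt F)\,\df t$ and renders your first expansion void. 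The coherent options are mutually exclusive: either (i) pass first to the partially integrated form of the index form, apply the decomposition of the Jacobi operator, and kill the $2A_{\dot\gamma}(D)$ and $T_D(\dot\gamma)$ terms against the ones produced by one further integration by parts, using the skew-symmetry of $A_{\dot\gamma}$ and $T_V$ and the symmetry $T_V(W)=T_W(V)$ --- this is O'Neill's route and the one the paper intends; or (ii) stay with the unintegrated form and expand $g(R(\dot\gamma,E)\dot\gamma,F)$ by O'Neill's curvature equations, which are not stated anywhere in this paper.

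A second, related point: the ``careful bookkeeping'' you defer to is the entire content of the proof, and the one delicate cancellation happens at $t=a$. The integration by parts of $\V(\Ddt D(E))$ leaves a boundary term $g(D(E)(a),F(a))$ (since $F(b)=0$ and $D(E)$ is vertical), and it is precisely the $D(E)(a)$ summand in \eqref{boundaryconditions} that absorbs it, leaving the boundary part of $I_{\{x,\mathcal P\}}(E_*,F_*)$. Your sketch never identifies this term, yet it is the only step the paper's proof writes out explicitly. I would restructure the argument along route (i) and carry out the interior and endpoint cancellations in full.
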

\begin{proof}
From \eqref{boundaryconditions} we get
\begin{multline*}
g(\Ddt E(a),F(a))-g({\mathcal S}^{\mathcal Q}(E(a),\dot\gamma(a)),F(a))\\
=g(D(E)(a),F(a))+g(\Ddt E_*(a),F_*(a))-g({\mathcal S}^{\mathcal P}(E(a),\dot\gamma(a)),F(a)),
\end{multline*}
for any vector field $F$ along $\gamma$ such that $F(a)\in T_{\gamma(a)}\mathcal Q$.
Having in mind this equation the proof follows the same lines of the proof of \cite[Theorem~3]{One67}.
\end{proof}

The relation between the index forms will be especially interesting in the case when
$M$ and $B$ are \emph{Lorentzian} manifolds, that is, manifolds endowed with a semi-Riemannian
metric of index $1$, and when $\gamma$ is a horizontal \emph{causal} geodesic (that is $g(\dot\gamma,\dot\gamma)\leq 0$),
because in that  case we can use the Lorentzian Morse index theorem (see \cite[Theorem II.5]{PiTau99} and also \cite[Theorems 10.27 and 10.77]{BEE}).
We define
$\omega(\gamma)=\dim \{J\in \J:J(a)=J(b)=0\}$ and $\omega_\delta(\gamma)=
\dim \{J\in \J_\delta (x(a)) :J(a)=J(b)=0\}$ and $\omega_n(\gamma)=\omega(\gamma)-
\omega_\delta(\gamma)$. Moreover, $i(\gamma)$ (resp. $i(x)$) denotes
the index of the index  form restricted to the orthogonal vector
fields to $\gamma$ (resp. to $x$)  vanishing in $a$ and $b$ if $\gamma$ is
timelike (i.e. $g(\dot\gamma,\dot\gamma)<0$) and to the orthogonal vector
fields to $\gamma$ modulo vector fields collinear to $\dot \gamma$ if $\gamma$ is lightlike
(i.e. $g(\dot\gamma,\dot\gamma)=0$).
\begin{teo}\label{indexrelation}
Let $\pi:M\rightarrow B$ be a Lorentzian submersion and let $\gamma:[a,b]\rightarrow M$
be a causal  horizontal geodesic segment. Then
\begin{equation}\label{desigual}i(x)\geq i(\gamma)+\omega_n(\gamma).\end{equation}
\end{teo}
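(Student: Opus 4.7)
The plan is to exploit the index-form identity from the preceding theorem, applied with $\mathcal P=\{x(a)\}$ and $\mathcal Q=\pi^{-1}(\{x(a)\})$; restricting to $H^1$ fields that vanish at $a,b$ (the function space for both $i(\gamma)$ and $i(x)$ in the timelike case), the second-fundamental-form boundary contributions drop out and the identity reduces to
\[
I_{\{\gamma\}}(E,E)=I_{\{x\}}(E_*,E_*)+\int_a^b g\big(D(E),D(E)\big)\,\df t.
\]
Since $M$ and $B$ are Lorentzian and $\df\pi\vert_\Hor$ is an isometry, the vertical distribution $\V TM$ is positive definite. Inspection of the definition of $D(E)$ shows that $\V(\Ddt V)$, $T_V(\dot\gamma)$, and $A_{\dot\gamma}(H)$ are all vertical (the latter two because $\dot\gamma$ is horizontal), so $D(E)$ itself is vertical. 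Hence the correction term is nonnegative, vanishing iff $D(E)\equiv 0$, and in particular $I_{\{\gamma\}}(E,E)\ge I_{\{x\}}(E_*,E_*)$.

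Next I would fix a maximal $I_{\{\gamma\}}$-negative-definite subspace $V_\gamma$ of the relevant function space, so $\dim V_\gamma=i(\gamma)$, together with any complement $\mathcal K$ of $\{J\in\J_{ab}:D(J)\equiv 0\}$ inside $\J_{ab}:=\{J\in\J:J(a)=J(b)=0\}$, so $\dim\mathcal K=\omega_n(\gamma)$. The sum $V_\gamma+\mathcal K$ is direct because $\mathcal K$ sits in the radical of $I_{\{\gamma\}}$ while $I_{\{\gamma\}}$ is negative definite on $V_\gamma$. I would then project to $x$ via $E\mapsto E_*$ and establish that (i) this map is injective on $V_\gamma\oplus\mathcal K$, and (ii) $I_{\{x\}}$ is negative definite on its image. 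This exhibits an $(i(\gamma)+\omega_n(\gamma))$-dimensional negative-definite subspace for $I_{\{x\}}$ inside the function space computing $i(x)$---the image consists of fields vanishing at $a,b$ and $h$-orthogonal to $\dot x$, since $g(\dot\gamma,E)=h(\dot x,E_*)$ by horizontality of $\dot\gamma$ and the isometry property of $\df\pi$ on $\Hor TM$, while Jacobi fields in $\J_{ab}$ are automatically $g$-orthogonal to $\dot\gamma$ by a standard computation---whence $i(x)\ge i(\gamma)+\omega_n(\gamma)$.

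The main obstacle is step (i), which forces the two key ingredients (sign of the correction term and negative-definiteness on $V_\gamma$) to be used simultaneously. Suppose $E=E_1+J\in V_\gamma\oplus\mathcal K$ satisfies $E_*\equiv 0$; then $E$ is purely vertical, so the identity gives $I_{\{\gamma\}}(E,E)=\int_a^b g(D(E),D(E))\,\df t\ge 0$. On the other hand, integration by parts against $J$ (using $J(a)=J(b)=0$ and the curvature symmetry $g(R(X,Y)Z,W)=g(R(Z,W)X,Y)$) yields $I_{\{\gamma\}}(E_1,J)=0$, hence $I_{\{\gamma\}}(E,E)=I_{\{\gamma\}}(E_1,E_1)\le 0$. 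Both bounds collapse, forcing $E_1=0$ by negative-definiteness and $D(J)\equiv 0$ by positivity of the vertical metric, so $J\in\mathcal K\cap\{D\equiv 0\}=\{0\}$. The same cascade, applied to a general $E\in V_\gamma\oplus\mathcal K$, gives
\[
I_{\{x\}}(E_*,E_*)=I_{\{\gamma\}}(E_1,E_1)-\int_a^b g\big(D(E),D(E)\big)\,\df t\le 0,
\]
with equality iff $E=0$, settling (ii). The lightlike case is handled by reading the entire argument modulo $\dot\gamma$ along $\gamma$ and modulo $\dot x$ along $x$; this is compatible since $(c\dot\gamma)_*=c\dot x$, and $\mathcal K$ descends cleanly because its elements are $g$-orthogonal to $\dot\gamma$.
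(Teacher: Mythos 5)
Your argument is correct and is essentially the proof the paper has in mind: it is O'Neill's argument for \cite[Theorem 5]{One67} (to which the paper's one-line proof defers), built on the index-form identity $I_{\{\gamma\}}(E,F)=I_{\{x\}}(E_*,F_*)+\int_a^b g(D(E),D(F))\,\df t$, the projection of a maximal negative subspace together with a complement $\mathcal K$ of $\{J\in\J_{ab}:D(J)=0\}$, and the fact that Jacobi fields vanishing at both endpoints lie in the radical of the fixed-endpoint index form. You correctly supply the two Lorentzian-specific ingredients that make the adaptation work, namely the positive definiteness of the vertical distribution (so that $\int g(D(E),D(E))\ge 0$ with equality iff $D(E)\equiv 0$) and the compatibility of $E\mapsto E_*$ with the restriction to $\dot\gamma$-orthogonal fields, respectively the quotient modulo $\dot\gamma$ in the lightlike case.
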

\begin{proof}
Totally analogous to proof of \cite[Theorem 5]{One67}.
\end{proof}
\begin{rem}
We observe that by Theorem \ref{thm:eqMaslovindices} and the equality between the Maslov index of $x$ and
$i(x)$ (cf.~\cite{PiTa02}), the index $i(x)$ coincides with the $\V(a)$-Maslov index
$i_{\V (a)}(\gamma)$, where $\V(a)=\pi^{-1}(x(a))$.   We have
$i(x)=i_{\V (a)}(\gamma)\geq i(\gamma)$, because the index form for conjugate points coincides with the restriction of
the index form for focal points to vector fields vanishing in $a$ and $b$.
Then, from the Lorentzian index theorem, focal points along $\gamma$ occur
before than conjugate points along $x$.
  In the case of Lorentzian submersions the situation is more rigid.  If there exists a conjugate instant  $t_0$ of $\gamma$ that is not $\V (a)$-focal, then  \eqref{desigual} implies that   the total number of  $\V(a)$-focal instants in  $]a,t_0[$ is strictly bigger than the total number of conjugate points along $\gamma$ in $]a,t_0[$.
We observe that, unlike the Riemannian or the causal Lorentzian case, there is in general no obvious relation between
the distribution of conjugate and focal instants along a semi-Riemannian geodesic (see for instance \cite{JavPic08}).
\end{rem}

\end{subsection}
\subsection{Applications}\label{sub:applications}
Recalling Example~\ref{exa:statspacetimesubm}, Proposition~\ref{contribution} and Theorem~\ref{thm:eqMaslovindices}
give us the following information on spacelike geodesics in stationary spacetimes:
\begin{prop}\label{thm:applstationary}
Let $(S\times\R,g)$ be a standard stationary spacetime,
$\gamma=(x,t):[a,b]\rightarrow S\times\R$ a geodesic orthogonal to the flow
lines  of the Killing field $\partial_t$ and $\mathcal L$ the vertical line through $x(a)$. Then
\begin{itemize}
\item[(a)] the $\mathcal L$-focal instants of $\gamma$ in $(S\times\R, g)$
coincide with the conjugate instants of $x$ in $(S,\tilde g)$
and they give the same contribution to the Maslov index
(in particular the number of $\mathcal L$-focal instant of $\gamma$ is finite);
\item[(b)] the $\mathcal L$-focal instants of $\gamma$ are isolated and their contribution to the Maslov index is positive, so that they are always bifurcation points (see \cite{PiPoTa04}).
\end{itemize}
\end{prop}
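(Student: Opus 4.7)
The plan is to apply Proposition~\ref{contribution} and Theorem~\ref{thm:eqMaslovindices} to the semi-Riemannian submersion $\pi:(S\times\R,g)\to(S,\tilde g)$ of Example~\ref{exa:statspacetimesubm}, choosing $\mathcal{P}=\{x(a)\}$ so that $\mathcal{Q}=\pi^{-1}(\mathcal{P})=\mathcal{L}$. First I would verify that $\gamma$ is horizontal in the submersion sense: the hypothesis $g(\dot\gamma,\partial_t)\equiv 0$ is exactly $\dot\gamma\in\Hor T(S\times\R)$, since $\partial_t$ spans the vertical distribution. The initial orthogonality conditions needed in order to define the $\mathcal{Q}$- and $\mathcal{P}$-Maslov indices are then automatic: for $\mathcal{P}=\{x(a)\}$ the condition is vacuous, while for $\mathcal{Q}=\mathcal{L}$ it reduces to $\dot\gamma(a)\perp\V T_{\gamma(a)}(S\times\R)$, which holds by horizontality.

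For part~(a), Proposition~\ref{contribution} yields the bijection between $\mathcal{L}$-focal instants of $\gamma$ and $\{x(a)\}$-focal, i.e., conjugate, instants of $x$ in $(S,\tilde g)$. Since $\tilde g$ is Riemannian, every conjugate instant of $x$ is automatically non-degenerate in the sense of that proposition: the restriction of a positive-definite form to any subspace is non-degenerate, so in particular the restriction of $\tilde g$ to $\J_*(\{x(a)\})[t_0]^\perp$ is non-degenerate. Hence the contributions to the two Maslov indices coincide instant-by-instant, and finiteness of the set of $\mathcal{L}$-focal instants is the standard Riemannian fact that conjugate instants along a geodesic on a compact parameter interval form a finite set. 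Alternatively, the global equality is an immediate consequence of Theorem~\ref{thm:eqMaslovindices}.

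For part~(b), isolatedness of the $\mathcal{L}$-focal instants is inherited from the isolatedness of conjugate instants of a Riemannian geodesic via the correspondence established in~(a). The signature-based formula used in the proof of Proposition~\ref{contribution} shows that the contribution of a conjugate instant of $x$ in $(S,\tilde g)$ equals the signature of $\tilde g$ restricted to $\J_*(\{x(a)\})[t_0]^\perp$; since $\tilde g$ is positive definite, this signature equals $\dim\J_*(\{x(a)\})[t_0]^\perp>0$. By~(a) the same strictly positive integer is the contribution of $\gamma(t_0)$ to the $\mathcal{L}$-Maslov index. The bifurcation assertion then follows from the general criterion of \cite{PiPoTa04}, according to which any instant at which the Maslov index has a nonzero jump is a bifurcation point of the geodesic problem.

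The only mildly delicate issue is the passage from the semi-Riemannian signature formula to the \emph{positivity} of each contribution; this comes for free from the Riemannian character of the base $(S,\tilde g)$ established in Example~\ref{exa:statspacetimesubm}, since in index-theoretic terms the stationary case is genuinely tamer than a general semi-Riemannian submersion. Beyond this, the proof is a straightforward specialization of the general results proved in the preceding sections.
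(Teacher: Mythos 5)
Your proposal is correct and follows exactly the route the paper intends: Proposition~\ref{thm:applstationary} is stated there as a direct consequence of Example~\ref{exa:statspacetimesubm}, Proposition~\ref{contribution} and Theorem~\ref{thm:eqMaslovindices}, with $\mathcal P=\{x(a)\}$ and $\mathcal Q=\mathcal L$, and the positivity, isolatedness and bifurcation claims coming from the Riemannian character of $(S,\tilde g)$ together with the criterion of \cite{PiPoTa04}. Your write-up simply makes explicit the details (horizontality of $\gamma$, automatic non-degeneracy over a Riemannian base, signature equal to dimension) that the paper leaves to the reader.
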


\begin{rem}
We observe that the $\mathcal L$-focal points of $\gamma$ are in particular pseudo focal points as defined in \cite{JMP07}.
\end{rem}
From Example \ref{exa:kk}, Proposition~\ref{contribution} and Theorems~\ref{thm:eqMaslovindices} and \ref{indexrelation} we obtain the following results for Kaluza-Klein spacetimes.
\begin{prop}\label{thm:applKaluzaKlein} Let $(M,g)$  be a
 Kaluza-Klein spacetime, as in Example \ref{exa:kk}. Consider a geodesic $x:[a,b]\rightarrow  B$ in $(B,h)$  and let
 $\gamma$ be a horizontal lift of $x$ in $(M,g)$.  If $\mathcal L$ is the fiber  through $x(a)$, then
\begin{itemize}
\item[(a)]the $\mathcal L$-focal instants of $\gamma$ in $(M, g)$  coincide with the conjugate instants of
$x$ in $(B,h)$
and they give the same contribution to the Maslov index.
\item[(b)] if $\gamma$ is causal,   then at least one $\mathcal L$-focal point occurs before   than a conjugate point of $\gamma$. In particular,   from \eqref{desigual},  conjugate points in $\gamma$ determine conjugate points in $x$.
\end{itemize}
\end{prop}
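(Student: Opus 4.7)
The strategy is to reduce both assertions to the general results of the previous sections by specializing to the singleton submanifold $\mathcal P = \{x(a)\} \subset B$, whose total lift is $\mathcal Q = \pi^{-1}(x(a)) = \mathcal L$. A preliminary point is that $\gamma$ is automatically a horizontal geodesic of $(M,g)$: for a horizontal curve $\alpha$ one has $\V\dot\alpha = 0$, and the corollary immediately following Theorem~\ref{covariant} collapses to $\Hor(\Ddt\dot\alpha) = \Ddt\dot\alpha_*$ and $\V(\Ddt\dot\alpha) = 0$. Since $x = \pi\circ\gamma$ is a geodesic of $(B,h)$ by hypothesis, both components of $\Ddt\dot\gamma$ vanish, so $\gamma$ is a (horizontal) geodesic of $(M,g)$ and the hypotheses of Theorems~\ref{thm:eqMaslovindices} and \ref{indexrelation} are met.

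For part (a) I would then invoke Proposition~\ref{contribution} and Theorem~\ref{thm:eqMaslovindices} for this choice of $\mathcal P$. Both second fundamental forms ${\mathcal S}^{\mathcal P}$ and ${\mathcal S}^{\mathcal Q}$ are trivially zero, the $\mathcal P$-focal instants of $x$ are precisely its conjugate instants, and by definition the $\mathcal Q$-focal instants of $\gamma$ are its $\mathcal L$-focal instants. The cited results directly yield the coincidence of the two families of instants together with the equality of their contributions to the Maslov index.

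For part (b) the argument will be based on inequality \eqref{desigual} combined with the Lorentzian Morse index theorem (to which Theorem~\ref{indexrelation} refers, and which applies because $\gamma$ causal implies $\dot x = \df\pi(\dot\gamma)$ causal in $(B,h)$, since the fiber metric is Riemannian). Suppose $t_0 \in (a,b]$ is a conjugate instant of $\gamma$. By the Lorentzian index theorem applied on a slightly enlarged interval $[a, t_0 + \varepsilon]$ one has $i(\gamma|_{[a,\,t_0+\varepsilon]}) \geq 1$, so \eqref{desigual} gives $i(x|_{[a,\,t_0+\varepsilon]}) \geq 1$; running the Lorentzian index theorem in $(B,h)$ in the reverse direction produces a conjugate instant of $x$ in $(a, t_0+\varepsilon)$. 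Since conjugate instants along a causal Lorentzian geodesic form an isolated set, for $\varepsilon$ small enough this instant must lie in $(a, t_0]$, and by part (a) it is an $\mathcal L$-focal instant of $\gamma$ not occurring after $t_0$. The closing assertion, that conjugate points of $\gamma$ force conjugate points of $x$, is exactly this statement rephrased via (a).

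The main technical obstacle I anticipate is the careful handling of the boundary instant $t = t_0$ in the Morse-type counting: one has to ensure that the $\mathcal L$-focal instant exhibited by the argument genuinely precedes the conjugate instant of $\gamma$, rather than coinciding with it. This is exactly where isolation of conjugate points along a causal Lorentzian geodesic is needed, together with the fact that the Lorentzian index theorem counts conjugate points in the open interior of the parameter interval; the same caveat also justifies the passage $\varepsilon \to 0$.
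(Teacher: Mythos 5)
Your proposal is correct and takes essentially the same route as the paper, which obtains the statement by specializing Proposition~\ref{contribution}, Theorem~\ref{thm:eqMaslovindices} and Theorem~\ref{indexrelation} (plus the Lorentzian Morse index theorem) to $\mathcal P=\{x(a)\}$, $\mathcal Q=\pi^{-1}(x(a))=\mathcal L$; your preliminary verification that the horizontal lift of a geodesic is again a geodesic is a correct and worthwhile addition. The only (inessential) slip is the assertion that $\mathcal S^{\mathcal Q}$ is ``trivially zero'': by \eqref{ver} it reduces to $T_V(Z)$, the second fundamental form of the fibre, whose vanishing is neither obvious nor needed, since the cited results hold for arbitrary non-degenerate $\mathcal P$.
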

We recall that if the contribution to the Maslov index of a focal instant is non null and the instant is isolated and non-degenerate, then it generates bifurcation (see \cite{PiPoTa04}). As a consequence of Proposition~\ref{contribution} and Theorem~\ref{thm:eqMaslovindices} we get the following.
\begin{prop}
Let $\pi:M\rightarrow B$ be a semi-Riemannian submersion with $B$ Riemannian or Lorentzian. If $\gamma$ is a horizontal geodesic such that the projection $x$ is Riemannian or causal, the
${\mathcal Q}$-focal points always generate bifurcation.
\end{prop}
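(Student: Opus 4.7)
The plan is to reduce the statement to the base geodesic $x$, where the Riemannian or causal Lorentzian hypotheses force the focal points to be ``good'' (isolated, non-degenerate, with positive contribution to the Maslov index), and then transport this information back to $\gamma$ via Proposition~\ref{contribution} and Theorem~\ref{thm:eqMaslovindices}.

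More concretely, I would first recall that in the Riemannian setting the $\mathcal{P}$-index form associated to $x$ is positive semi-definite on a direct complement of $\mathcal{P}$-Jacobi fields, so by the classical Morse index theorem the $\mathcal{P}$-focal instants along $x$ are finite in number, each non-degenerate, and each contributes its multiplicity (a strictly positive integer) to the Maslov index $\mu_{L_0}(\ell_*)$. In the causal Lorentzian case the same conclusions follow from the Lorentzian Morse index theorem (\cite[Theorem~II.5]{PiTau99}, cf.\ the discussion preceding Theorem~\ref{indexrelation}): focal instants of a causal geodesic in a Lorentzian manifold are isolated, non-degenerate, and contribute positively to the Maslov index. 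Hence in both admissible cases every $\mathcal{P}$-focal instant $t_0$ of $x$ satisfies the three hypotheses of the bifurcation criterion of \cite{PiPoTa04}.

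Next, I would invoke Proposition~\ref{contribution}: a non-degenerate $\mathcal{P}$-focal instant $t_0$ of $x$ is automatically a non-degenerate $\mathcal{Q}$-focal instant of $\gamma$, and the contribution to the Maslov index at $t_0$ computed along $\gamma$ equals that computed along $x$. Combined with Theorem~\ref{thm:eqMaslovindices}, which guarantees that the two Maslov indices agree globally, this tells us that every $\mathcal{Q}$-focal instant of $\gamma$ comes from a $\mathcal{P}$-focal instant of $x$, is isolated and non-degenerate, and carries the same strictly positive contribution to $\mu_{L_0}(\ell)$.

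Finally, I would conclude by appealing to the bifurcation criterion: an isolated non-degenerate focal instant whose contribution to the Maslov index is non-zero is a bifurcation instant for the family of geodesics departing orthogonally from $\mathcal{Q}$ (see \cite{PiPoTa04}). The main conceptual point to be careful about is that Proposition~\ref{contribution} is formulated only for non-degenerate instants, so the argument genuinely needs the Riemannian/causal assumption on $x$ to ensure non-degeneracy; without it one would have to handle possibly degenerate $\mathcal{Q}$-focal points, which the abstract bifurcation theory does not cover. This is the only subtle step, and once the non-degeneracy is secured by the hypothesis the rest is a direct assembly of the results already proved.
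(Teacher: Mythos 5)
Your proposal is correct and follows essentially the same route as the paper, which states this proposition as an immediate consequence of Proposition~\ref{contribution}, Theorem~\ref{thm:eqMaslovindices} and the bifurcation criterion of \cite{PiPoTa04} (isolated, non-degenerate focal instant with non-zero Maslov contribution), relying exactly as you do on the Riemannian/causal Lorentzian Morse index theory in the base to secure isolation, non-degeneracy and positivity of the contribution. Your additional remark that non-degeneracy transfers from $x$ to $\gamma$ is justified by the isometry between $\J_*(\mathcal{P})[t_0]^\bot$ and $\J_\delta(\mathcal{P})[t_0]^\bot$ established in the proof of Proposition~\ref{contribution}.
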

On the other hand, focal points are always isolated in an analytic manifold. As a consequence:
\begin{prop}
Let $\pi:M\rightarrow B$ be a semi-Riemannian submersion with $B$ analytic. Then  a  horizontal geodesic $\gamma:[a,b]\rightarrow M$ admits just a finite number of
$\mathcal Q$-focal points.
\end{prop}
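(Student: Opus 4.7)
The plan is to deduce the finiteness of $\mathcal Q$-focal instants of $\gamma$ from the corresponding statement for the projected geodesic $x=\pi\circ\gamma$ in the analytic base $B$. The entire reduction is provided by Proposition~\ref{contribution}: a point $t_0\in(a,b]$ is a $\mathcal Q$-focal instant of $\gamma$ if and only if it is a $\mathcal P$-focal instant of $x$. Therefore it suffices to show that $x$ has finitely many $\mathcal P$-focal instants in $[a,b]$.

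For this, I would exploit analyticity of $B$ to turn the focal condition into the vanishing of an analytic function on $[a,b]$. Since $h$ and $R_*$ are real-analytic and $x$ is a geodesic of an analytic connection, $x$ itself is analytic, and so are the coefficients of the Jacobi equation along $x$. Fixing any analytic trivialization of $x^*TB$ (e.g., a parallel one, which is analytic in this setting), the $m$-dimensional space $\J_*(\mathcal P)$ is generated by analytic vector fields $J_1,\dots,J_m$ whose initial data encode the tangency condition along $\mathcal P$ and the prescribed tangential part of $\Ddt J_i(a)$ determined by $\mathcal S^{\mathcal P}$. The $\mathcal P$-focal instants are exactly the zeros in $(a,b]$ of the analytic determinant $f(t)=\det\bigl(J_1(t),\dots,J_m(t)\bigr)$.

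The key observation is that $f$ is not identically zero: the $J_i$ are linearly independent as solutions of the Jacobi equation, so their initial Cauchy data $\bigl(J_i(a),\Ddt J_i(a)\bigr)$ are linearly independent in $T_{x(a)}B\oplus T_{x(a)}B$, and at $t=a$ the system $(J_i(a),\Ddt J_i(a))$ cannot become pointwise dependent except on a proper analytic subset of $[a,b]$. This is precisely the observation recorded in the paragraph preceding the proposition (``focal points are always isolated in an analytic manifold''). Therefore the zeros of $f$ are isolated in $[a,b]$, and by compactness of $[a,b]$ they are finitely many. By Proposition~\ref{contribution}, the same finite set describes the $\mathcal Q$-focal instants of $\gamma$, which concludes the proof.

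The only genuinely non-routine point is justifying that $f\not\equiv 0$ in the semi-Riemannian setting, but this is already invoked as an established fact in the text above the statement, and in any case it follows from the analytic Cauchy uniqueness for the Jacobi equation together with linear independence of the initial data of the chosen basis of $\J_*(\mathcal P)$.
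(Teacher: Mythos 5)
Your proposal is correct and follows exactly the route the paper intends: reduce to the base via Proposition~\ref{contribution}, and then use analyticity of $B$ to conclude that the $\mathcal P$-focal instants of $x$ are the isolated zeros of a non-trivial analytic determinant on the compact interval $[a,b]$ (the paper itself offers no more than the preceding remark that focal points are isolated in an analytic manifold). The one imprecision is your justification of $f\not\equiv 0$: linear independence of the Cauchy data $\bigl(J_i(a),\Ddt J_i(a)\bigr)$ alone does not prevent the determinant from vanishing identically (for $v''=0$ in $\R^2$ take $J_1=(1,0)$ and $J_2=(t,0)$), so one should instead expand $f$ near $t=a$ and use the non-degeneracy of $T_{x(a)}\mathcal P$, whence $T_{x(a)}\mathcal P\oplus T_{x(a)}\mathcal P^\perp=T_{x(a)}B$ and the leading Taylor coefficient of $f$ at $a$ is nonzero --- the standard argument that focal instants do not accumulate at the initial instant.
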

\end{section}

\end{document}